\newtheorem{theorem}{Theorem}[section]
\newtheorem{definition}{Definition}[section]
\newtheorem{lemma}{Lemma}[section]
\newenvironment{proof}[1][Proof]{\textbf{#1.} }{\ \rule{0.5em}{0.5em}}
\newcommand\E{{\mathbb E}}
\newcommand\N{{\mathbb N}}
\newcommand\R{{\mathbb R}}
\newcommand\Z{{\mathbb Z}}
\newcommand\ind[1]{\ensuremath{{\mathbf1}_{#1}}}
\newcommand\weaklyto{\stackrel{\mathcal D}\rightarrow}
\newcommand\abs[1]{\left\vert#1\right\vert}
\newcommand\given{\;\mid\;}
\newcommand\st{\;:\;}
\newcommand\midgiven{\;\vrule width 0.4pt\;} 
\newcommand\sA{{\mathcal A}}
\newcommand\sF{{\mathcal F}}
\newcommand\sL{{\mathcal L}}
\begin{document}
\title{\textbf{Information recovery from observations by a
random walk having jump distribution with exponential tails}}

\author{ANDREW HART%
\thanks{Departamento de Ingenier{\'\i}a Matem\'atica and Centro de Modelamiento 
Matem\'atico, UMI 2071 CNRS-UCHILE, Facultad de Ciencias F\'isicas y 
Matem\'aticas, Universidad de Chile, Casilla 170, Correo 3, Santiago, Chile}%
\thanks{E-mail:  \texttt{ahart@dim.uchile.cl}}
\and 
FABIO MACHADO
\thanks{Instituto de Matematica e Estatistica,
Universidade de São Paulo,
Rua do Matão 1010,
São Paulo, SP - Brazil} 
\and
HEINRICH MATZINGER%
\thanks{School of Mathematics,
Georgia Institute of Technology,
656 Cherrystreet,
Atlanta, 30363 GA,
USA}%
\thanks{E-mail: \texttt{matzi@math.gatech.edu}}%
\thanks{Corresponding author.}
}
\maketitle

\begin{abstract}
A  {\it scenery} is a coloring $\xi$ of the integers. Let
$\{S_t\}_{t\geq 0}$ be a recurrent random walk on the integers.
Observing the scenery  $\xi$ along the path of this random walk, one
sees the color $\chi_t:=\xi(S_t)$ at time $t$. The {\it scenery
reconstruction problem} is concerned with recovering the scenery
$\xi$, given only the sequence of observations $\chi:=(\chi_t)_{t\geq
0}$.  The scenery reconstruction methods presented to date
require the random walk to have bounded increments.  Here, we
present a new approach for random walks with unbounded increments
which works when the tail of the increment distribution decays
exponentially fast enough and the scenery has five colors.
\end{abstract}

{\footnotesize
Keywords:  Scenery reconstruction, scenery distinguishing, large
deviation, random walk.

\bigskip\noindent
2000 MSC:  60K37, 60G50.
}

\section{Introduction}

Consider a coloring of the integers $\xi:\Z\rightarrow\{1,2,3,4,5\}$ which we 
shall call a scenery. Let $S$ be a recurrent random walk starting at the origin. 
We assume that we observe the scenery along the path of the random walk $S$, 
that is, we observe the color $\chi_t:=\xi(S_t)$ at time $t$. The scenery 
reconstruction problem is concerned with determining the scenery $\xi$ using a 
single realization of  the color record
$$
\chi:=(\chi_0, \chi_1, \chi_2, \ldots).
$$
The scenery reconstruction problem can also be formulated as follows:  Does 
one path realization of the process $\{\chi_t\}_{t\geq 0}$ uniquely 
determine~$\xi$? The answer in such general terms is ``no''. However, under 
appropriate restrictions, the answer becomes ``yes''.   Firstly, we can at best 
hope to be able to reconstruct the scenery up to translation and reflection. 
Secondly, there are sceneries which can not be reconstructed. Lindenstrauss 
in~\cite{Lindenstrauss} exhibited sceneries which cannot be reconstructed.  Only 
`typical' sceneries can be reconstructed. Sceneries that can be obtained from 
one another by shift and reflection are called equivalent and we shall use the 
symbol $\approx$ to denote their equivalence.

The scenery reconstruction problem arose from questions posed by Kesten, Keane, 
Benjamini, Den Hollander and others.  It also falls into the research area 
concerned with the investigation of the ergodic properties of the color 
record~$\chi$. One of the motivations for studying scenery reconstruction comes 
from ergodic theory, for example via the $T,T^{-1}$problem; see 
Kalikow~\cite{Kalikow}. The ergodic properties of the observations $\chi $ were 
studied by Heicklen, Hoffman, Rudolph in \cite{Hoffman2000}, Kesten and Spitzer 
in \cite{KestenSpitzer}, Keane and den Hollander in \cite {KeaneDenHollander}, 
den Hollander in \cite{denHollander} and den Hollander and Steif in 
\cite{denHollanderSteif}.

A related important problem is the distinguishing of sceneries: Benjamini, den 
Hollander, and Keane independently asked whether all non-equivalent sceneries 
could be distinguished. We give a brief outline of this problem.  Let $\eta 
_{1}$ and $\eta _{2}$ be two given sceneries. Assume that either $\eta _{1}$ or 
$\eta _{2}$ is observed along a random walk path, but we do not know which one. 
Can we tell which of the two sceneries was observed? Kesten and Benjamini proved 
that one can distinguish almost every pair of sceneries, even in two dimensions 
and with only two colors. Before that, Howard had proved in \cite{Howard1}, 
\cite{Howard2}, and \cite{Howard3} that any two periodic one dimensional non-
equivalent sceneries are distinguishable, and that one can almost surely 
distinguish single defects in periodic sceneries. The problem of distinguishing 
two sceneries which differ only in one point is called ``detecting a single 
defect in a scenery''. Kesten in \cite{KestenSingDef} proved that one can a.s.\ 
recognize a single defect in a random scenery with at least five colors. He 
asked whether one can distinguish a single defect even if there are only two 
colors in the scenery.

Kesten's question was answered by Matzinger in his Ph.D.\ 
thesis~\cite{HeinisDiss} Given that the colors in the scenery are taken to be 
i.i.d.\ uniformly distributed, he showed that almost every 2-color scenery can 
be almost surely reconstructed up to equivalence. In \cite{Heini3color}, 
Matzinger proved that almost every 3-color scenery can be almost surely 
reconstructed. Kesten~\cite{KestenReview} noticed that the proofs employed 
in~\cite{HeinisDiss} and~\cite{Heini3color} rely heavily on the skip-free 
property of the random walk as well as the one-dimensionality of the scenery. He 
asked whether the result might still hold in more general situations.

In~\cite{Lowe-Matzinger-two-dim}, Matzinger and  L\"{o}we showed that one can 
still reconstruct sceneries in two dimensions, provided there are sufficiently 
many colors. In \cite{Matzinger-Rolles2001}, Rolles and Matzinger adapted the 
method proposed by L\"{o}we, Merkl and me to the case where random errors occur 
in the observed color record. They showed that the scenery can be reconstructed 
provided the probability of the errors is small enough. When the observations 
are seen with random errors, the reconstruction of sceneries is closely related 
to some coin tossing problems. These have been investigated by Harris and 
Keane~\cite{Harris-Keane97} and Levin, Pemantle and 
Peres~\cite {Levin-Pemantle-Peres2000}.

This paper deals with the problem of whether one can reconstruct a scenery seen 
along the path of a random walk having unbounded jumps, a question which was 
asked by den Hollander. Our main result shows that we can a.s. reconstruct a 
five-color (random) scenery seen along the path of a random walk $S$ with 
unbounded jumps, provided the probability of making a jump of non-unit size is not too 
high and the tail of the increment distribution of the random walk decays 
exponentially fast enough.  By unbounded jumps, we mean that the support of the 
random walk's increment distribution is not bounded. 

Methods  for carrying out Scenery reconstruction differ greatly depending on the distribution of 
the scenery and the nature  of the random walk. The methods for scenery reconstruction for  a 
simple random walk (\cite{HeinisDiss}, \cite{Heini3color},\cite{Heini2Color}, 
\cite{Lowe-Matzinger-two-dim}, \cite{Matzinger-Rolles-2color}, 
\cite{Matzinger-Rolles-polynomial}, \cite{Matzinger-Rolles2001}), together with the methods 
appropriate when the support of the increment distribution is bounded 
(\cite{Lowe-Matzinger-Merkl2001} and \cite{messagetext}), fail in the setting of 
``unbounded jumps''. It was not possible to adapt existing methods to the 
present situation. The approach utilized here is fundamentally different from 
those which have been fruitfully applied in the setting of random walks having ``bounded 
jumps''.

We begin by explaining the setting considered in  this article.
Let $c>0$ be an exponential decay rate and $\epsilon>0$ the
probability of jumping a distance that is \textit{not} of unit length. We 
shall impose the following conditions on the random walk $S$:
\begin{equation}
\label{conditionS1}
P(\abs{S_{t+1}-S_t}\neq 1)= \epsilon
\end{equation}
and
\begin{equation}
\label{conditionS2}
P(\abs{S_{t+1}-S_t}=i\given \abs{S_{t+1}-S_t}\neq 1) \leq e^{-ci}.
\end{equation}
We shall also require that the distribution of the random walk be symmetric:
\begin{equation}
\label{conditionS3}
P(S_{t+1}-S_t=i)=P(S_{t+1}-S_t=-i).
\end{equation}
This condition could be replace by the weaker condition $\E[S_{t+1}-
S_t]=0$, but symmetry simplifies notation. Finally, to
ensure that the random walk is non-periodic, we shall stipulate that
\begin{equation}
\label{conditionS4}
P(S_{t+1}-S_t=0)>0.
\end{equation}

We can now formulate the main theorem of the paper.

\begin{theorem}
\label{maintheorem}
Assume the scenery $\xi:\Z\rightarrow \{1,2,3,4,5\}$ is i.i.d. with
the five colors uniformly distributed.  If Conditions
(\ref{conditionS1})--(\ref{conditionS4}) are all satisfied, then for
every $\epsilon>0$ small enough and $c>0$ large enough, we can a.s.
reconstruct the scenery~$\xi$. In other words, there exists a measurable map 
$$
\mathcal{A}:\{1,2,3,4,5\}^\N \rightarrow \{1,2,3,4,5\}^\Z
$$
such that
$$
P(\mathcal{A}\circ \chi \approx \xi)=1.
$$
\end{theorem}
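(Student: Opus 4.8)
The plan is to exploit the fact that, when $\epsilon$ is small, the random walk $S$ performs only steps of size $0$ or $\pm 1$ over long time intervals; over such a \emph{clean} interval the colour record $\chi$ is precisely what a lazy simple random walk would read off $\xi$, so we are back in the classical skip-free setting. The whole difficulty is then to recognise, from $\chi$ alone, which portions of the record were produced during clean traversals of identifiable parts of $\xi$, and to rule out that the rare big jumps forge such portions. Ruling this out is where the exponential tail hypothesis~(\ref{conditionS2}) and the freedom to enlarge $c$ come in, through a Borel--Cantelli estimate of large-deviation type; having five colours gives ample room for the recognisable patterns and consistency checks used below.

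\textbf{Markers.} First I would use that $\xi$ is i.i.d.\ uniform on five colours to produce, almost surely, a family of \emph{signal words} $(w_k)$, with $w_k$ sitting near the point $x_k:=k n_0$ for a fixed spacing $n_0$, with lengths $\abs{w_k}$ growing slowly (of order $\log k$), and arranged so that each $w_k$ occurs exactly once within a window of width $e^{c'\abs{w_k}}$ about $x_k$, for a suitable $c'>0$. Since a prescribed word of length $\ell$ reappears inside such a window with probability at most $e^{c'\ell}5^{-\ell}$, letting $\abs{w_k}$ grow fast enough makes these events summable in $k$, and Borel--Cantelli gives the claim. These words will act as anchors: a reliably detected occurrence of $w_k$ in $\chi$ pins down the location of $S$ in $\Z$.

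\textbf{Detection and reading.} Next I would show that almost surely there is a random time $T_0$ beyond which any block of $\chi$ that spells out some $w_k$ was in fact produced while $S$ crossed the unique copy of $w_k$ near $x_k$ using only unit steps. A spurious reading would require either a jump long enough to carry $S$ out of the uniqueness window of $w_k$ (probability at most $e^{-c\,e^{c'\abs{w_k}}}$, doubly exponentially small in $\abs{w_k}$), or $S$ reading $w_k$ off a fresh region of the scenery (probability at most $5^{-\abs{w_k}}$ by independence, with only polynomially many candidate regions up to a given time). Weighting the first possibility by $\epsilon e^{-ci}$, summing over jump sizes $i$, over anchors $k$, and over times, and combining with the slow growth of $\abs{w_k}$, one obtains a summable bound provided $\epsilon$ is small and $c$ is large; Borel--Cantelli then yields $T_0$. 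One must also discard spurious readings assembled from several jumps in quick succession, but each extra jump contributes a further factor that is a power of $\epsilon$, so the sum remains summable. Conversely, recurrence of $S$ together with~(\ref{conditionS4}) guarantees that each $w_k$ is read correctly along a clean stretch infinitely often, so every anchor is detected infinitely many times. \emph{This detection step is the main obstacle:} making all the error probabilities summable simultaneously forces a tight interplay between the marker length $\abs{w_k}$, the window width $e^{c'\abs{w_k}}$, the decay rate $c$ and the jump probability $\epsilon$, and it is exactly this balance that dictates the hypotheses ``$\epsilon$ small enough, $c$ large enough''; controlling readings forged by clusters of jumps rather than a single jump is the delicate part.

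\textbf{Local reconstruction and stitching.} With the anchors reliably identified, I would reconstruct $\xi$ one inter-anchor segment at a time. Whenever a detection of $w_k$ is followed, within a bounded number of steps, by a detection of $w_{k'}$ with $\abs{k-k'}=1$ through a \emph{bridge} sub-record consistent with unit steps, that bridge reads off the colours of $\xi$ between $x_k$ and $x_{k'}$ and its direction reveals which anchor lies to the left; both the colours and the orientation are confirmed by infinitely many such crossings, in both directions, hence determined with certainty. Pasting the reconstructed segments along the anchor chain recovers $\xi$ on all of $\Z$ up to translation and reflection, since every integer lies between two consecutive anchors and is visited infinitely often. Finally, each step above is a countable combination of cylinder events of $\chi$ together with almost-sure limits, so the resulting map $\mathcal{A}:\{1,2,3,4,5\}^{\N}\to\{1,2,3,4,5\}^{\Z}$ is measurable and satisfies $P(\mathcal{A}\circ\chi\approx\xi)=1$, as required.
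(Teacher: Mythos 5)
Your proposal contains genuine gaps, and the two most serious ones are precisely the points where the paper has to work hardest. First, there is a circularity in the marker construction: the signal words $w_k$ are substrings of the unknown scenery $\xi$, so the reconstruction map $\mathcal{A}$, which sees only $\chi$, has no way of knowing which words to search for. The paper resolves this by induction: it first reconstructs a finite central piece $\xi\vert_{[-n_0,n_0]}$ (using the known skip-free methods, valid because for small $\epsilon$ the walk makes only unit and zero steps for a long initial stretch), and then, at each stage, extracts the marker word $w$ from the \emph{already reconstructed} interval $[-n,n]$ in order to estimate the single new value $\xi(n+1)$. Your anchors would have to be bootstrapped in some comparable way, and nothing in your sketch does this. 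Second, your detection claim --- that a.s.\ there is a time $T_0$ after which every occurrence of $w_k$ in $\chi$ is genuine --- is false for words of fixed (or merely logarithmically growing) length. Since $\xi$ is i.i.d., every finite word recurs infinitely often in the scenery, and the recurrent walk a.s.\ crosses distant copies of $w_k$ and produces spurious readings at arbitrarily late times. The union bound you invoke, of the form (number of candidate readings up to time $t$) $\times 5^{-\abs{w_k}}$, grows in $t$ for fixed $k$ and is not summable. The paper escapes this only by tying the marker length to the logarithm of the time horizon: it uses a word of length $n+1$, restricts to times $t\leq T=2.4^{2n}$, confines the walk to $[-2.45^n,2.45^n]$ (event $B^n$), and counts $\delta$-paths via an entropy bound so that $2.45^n\cdot(2^{1+H(\delta)+2\delta}/5)^n<1$. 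This matching of scales is exactly what your ``$T_0$'' formulation omits.

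There is also a third, smaller gap in the stitching step: even on a clean stretch the walk backtracks and makes zero steps (Condition~(\ref{conditionS4}) forces lazy steps), so a bridge between detections of $w_k$ and $w_{k+1}$ does not directly spell out $\xi$ on $[x_k,x_{k+1}]$; recognising direct crossings from $\chi$ alone is essentially the classical skip-free reconstruction problem, which you assume rather than solve. It is worth noting that the paper's single-point step is not a ``read the colours off a clean crossing'' argument at all: it is statistical. It uses the stopping times at which $w$ is observed to generate approximate restarts of the walk from a stationary distribution $\mu$ supported on a small interval $J$, estimates the law of $\chi_r$ at a fixed offset $r=90\delta n$, subtracts the computable contribution from the known interval, and identifies $\xi(n+1)$ as the colour maximising the residual, using the inequality $P_\mu(S_r=n+1)>P_\mu(S_r\notin[-n,n+1])$. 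Your proposal would need either to adopt some version of this statistical comparison or to supply a genuinely different mechanism for extracting colours; as written it does neither.
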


The principal difficulty with proving the theorem is in
showing that we can reconstruct the scenery at one point
given that we have already managed to reconstruct the scenery
on an interval. The next three sections of the paper are dedicated
to obtaining an estimate~$\hat\xi_{n+1}$ of~$\xi(n+1)$ given the observations~$\chi$ and the restriction
of the scenery $\xi\vert_{[-n,n]}:=\xi_{-n}\xi_{-n+1}\cdots \xi_{n-1}\xi_n$.
Section~\ref{sec:simpleprobs} uses three simplified problems to introduce key concepts that shall be required in the sequel.
In Section~\ref{sec:singlepointreconstruction}, we describe the algorithm for retrieving $\xi_{n+1}$ from~$\chi$ given that we know a portion $\xi\vert_{[-n,n]}$ of the scenery.  The following section then proves that the single-point reconstruction algorithm succeeds, that is, $\hat\xi_{n+1}=\xi_{n+1}$, with high probability.  This is done by showing that the probability of failure is finitely summable, that is,
\begin{equation}
\label{finitesummability}
\sum_n P\bigl(\xi(n+1)\neq \hat\xi(n+1)\bigr)<\infty.
\end{equation}
Finally, we conclude by showing in Section~\ref{wholereconstruction} that
finite summability implies that the whole scenery~$\xi$ can be reconstructed almost surely up to equivalence.

\section{Three simplified problems}
\label{sec:simpleprobs}

To illustrate the key concepts needed for  performing scenery
reconstruction on 5-color sceneries observed by random walks with
unbounded jumps, we begin by presenting three simplified problems
and their solutions. Each solution highlights one key idea
which we shall use later.

\subsection{Reconstructing one point when the scenery is observed at i.i.d. locations}

Take any non random scenery $\xi:\N\rightarrow \{1,2,3,4,5\}$. Let
$a$ and $b$ be two integers with $a<b$ and define $I:=[a,b]$. Let
$Y_1,Y_2,\ldots$ be i.i.d. random variables such that
\begin{equation} 
\label{conditionforreconstrutpoint}
P(Y_i=b+1)>P(Y_i\notin[a,b+1]).
\end{equation}
The first simple problem we consider, is to reconstruct the scenery $\xi$
at a single point, namely $b+1$. 
So we need to determine the value of $\xi(b+1)$. For
this, we suppose we are given two things: The restriction of $\xi$ to
the interval $I$, that is, $\xi\vert_I:=(\xi_i\, i\in I)$, and an infinite
sequence of observations of the scenery~$\xi$ at the random
locations $Y_i$,
\begin{equation}
\label{observedsequence}
\xi(Y_1),\xi(Y_2),\ldots.
\end{equation}
Now, for any $e\in\{1,2,3,4,5\}$, we have
$$
P(\xi(Y_i)=e)=P(\xi(Y_i)=e,Y_i\in[a,b]) 
+ \delta_{\xi(b+1)=e}P(Y_i=b+1) + P(Y_i\notin[a,b+1],\xi(Y_i)=e).
$$
Let $q_e$ be the quantity 
\begin{equation}
\label{qe}
q_e:=P(\xi(Y_i)=e)-P(\xi(Y_i)=e,Y_i\in[a,b]).
\end{equation}
When inequality \ref{conditionforreconstrutpoint} holds, two
possibilities manifest. If $\xi(b+1)=e$, then $q_e\geq P(Y_i=b+1)$.
On the other hand, if $\xi(b+1)\neq e$, then $q_e< P(Y_i=b+1)$. We
can use this dichotomy to reconstruct $\xi(b+1)$ as follows: \\
Estimate $q_e$ and take the element $e\in\{1,2,3,4,5\}$ which maximizes it for the color  at point
$b+1$.

   To determine $q_e$, we first estimate the probability
$P(\xi(Y_i)=e)$ from the sequence \ref{observedsequence}.
The probability $P(\xi(Y_i)=e,Y_i\in[a,b])$ can be calculated,
since we  are given the restriction of $\xi$
to $I$. For this, we also assume that the distribution
of the $Y_i$'s is known to us.

Note that Condition~\ref{conditionforreconstrutpoint}
 holds automatically when the tail of $Y_i$ decays exponentially
with decay rate $r$ strictly less than $1/2$ and
\begin{equation}
\label{conditiononab}
a<0<b \mbox{ and } |a|\gg|b|.
\end{equation}
To see this, let the $Y_i$'s have a symmetric distribution with
\begin{equation}
\label{exponentialdecaycondition}
P(Y_i=y)\cdot r\geq P(Y_i=y+1)\;\;,\;\;\forall y\in\N,
\end{equation}
where $r<0.5$. Assuming that $|a|\gg |b|$, $P(Y_i<a)$ is
negligible in comparison to $P(Y_i=b+1)$. Furthermore, thanks to the
exponential decay, we obtain
$$
\frac{P(Y_i>b+1)}{P(Y_i=b+1)}\leq \frac{r}{1-r}<1.
$$
Finally, if $P(Y_i\leq a)$ is small enough then this last inequality
implies Condition~\ref{conditionforreconstrutpoint}.

\subsection{Reconstructing a point when the scenery is seen 
along an infinite number of random walks}
\label{sec:ex2}

Once again assume that $\xi$ is a non-random scenery. As in the
previous setting, the restriction of $\xi$ to $[a,b]$ is known and
we try to reconstruct $\xi(b+1)$. We also assume that condition
\ref{conditiononab} holds. Let
$$
\{S_t^1\}_{t\in\N},\{S_t^2\}_{t\in\N},\ldots
$$
be a sequence  of random walks independent of each other and
all of which start at the origin.  Each of the $S^i_t$'s is identical to the random walk $S_t$, which we define to have the following increment distribution:
\begin{equation}
\label{simplestincrementdistribution}
P(S_t-S_{t-1} = x)
= \left\{\begin{array}{ll}
\epsilon, \mbox{if } x=0, \\
\gamma, & \mbox{if } \abs x=1, \\
0, & \mbox{otherwise,}
\end{array}\right.
\end{equation}
where $\gamma=\frac{1-\epsilon}2$ and $t\in\N$.  Thus, each $S_t^i$ is a simple 
symmetric random walk modified to allow sojourns of duration greater than unity 
in any state.  This is the simplest random walk satisfying Conditions~\ref{conditionS1}--\ref{conditionS4}. Let $\chi^i$ denote the 
observations of the scenery~$\xi$ made by random walk number $i$:
$$
\chi^i:= (\chi_i^t,\ t=0,1,2,\ldots),
$$
where $\chi^i_t:=\xi(S_t^i)$. In addition to knowing the restriction 
$\xi\vert_{[a,b]}$, we assume that all the observations 
$\chi^1,\chi^2,\chi^3,\ldots$ made by the different random walks are given to 
us.

Take $q$ such that $1<q<3$. In order to apply the reconstruction technique from 
the previous section, we set $Y_i$ to be equal to $Y_i:=S^i_r$, where $r=qb$ is 
an integer. The state distribution of the symmetric random walk $S_t$ is given 
by
$$
P(S_t=x) = \left\{\begin{array}{ll}
\sum_{s\in[0,t-x] \st s+t \mbox{ is even}} \binom ts\binom{t-s}{(t-s+x)/2} \epsilon^s\gamma^{t-s}, & \mbox{if } \abs x\leq t, \\
0, & \mbox{otherwise.}
\end{array}\right.
$$
Provided $\abs x\leq t$, the decay rate of $S_t$ at the point~$x$ is given by
\begin{eqnarray*}
\rho(t,x) &:=& \frac{P(S_t=x+1)}{P(S_t=x)}
= \frac{\sum_{s\in[0,t-x-1] \st s+t \mbox{ is even}} \binom ts\binom{t-s}{(t-s+x+1)/2} \epsilon^s\gamma^{t-s}}{\sum_{s\in[0,t-x] \st s+t \mbox{ is even}} \binom ts\binom{t-s}{(t-s+x)/2} \epsilon^s\gamma^{t-s}} \\
&\leq& \frac{\sum_{s\in[0,t-x] \st s+t \mbox{ is even}} \binom ts\binom{t-s}{(t-s+x+1)/2} \frac{(t-s-x)/2}{(t-s+x+1)/2} \epsilon^s\gamma^{t-s}}{\sum_{s\in[0,t-x] \st s+t \mbox{ is even}} \binom ts\binom{t-s}{(t-s+x)/2} \epsilon^s\gamma^{t-s}} \\
&\leq& \max_{s\in[0,t-x]} \frac{t-s-x}{t-s+x+1}
= \frac{t-x}{t+x+1}.
\end{eqnarray*}
Then, 
\begin{equation}
\label{decayrate}
\rho(r,b) = \rho(qb,b) = \frac{q-1}{q+1+1/b} < \frac{q-1}{q+1}.
\end{equation}
For $q\in(1,3)$, The final expression is always less than
$1/2$ and, since~$q$ does not depend on~$b$, the tail of $S_r$ decays
at a rate less than $1/2$ beyond the point $b$. Hence, condition
\ref{exponentialdecaycondition} is satisfied and we can apply the
reconstruction technique described in the previous subsection.

	So far, we have described how to perform a one-point reconstruction
when presented with an infinite set of realizations of the random
walk~$S$.  However, in the problem under consideration in this
paper, we only have access to the observations made by a single
random walk $S$. We get around this limitation by using stopping
times to restart the random walk in a predetermined way.  This
technique yields the infinite set of realizations we require.  Let
$\tau_i$ denote the time of the $i$-th visit by~$S$ to
the origin. Then due to the strong Markov property, the random walk
after time $\tau_i$ behaves like a random walk starting at the
origin. Hence, if we are given the piece of scenery $\xi\vert_{[a,b]}$
together with  observations $\chi$ and the sequence of stopping
times $\tau_1,\tau_2,\ldots$, then we can use the reconstruction
technique for an infinite number of random walks. For this we simply
take the sequence of observations $\chi^i$ made by the $i-th$ random
walk to be
$$
\chi_{\tau_i}, \chi_{\tau_i+1}, \chi_{\tau_i+2}, \ldots, \chi_{\tau_{i+1}-1}.
$$

\subsection{Finding the way back to the origin}

Because we are only given the sequence of observations made by the
random walk, we are not able to determine when it is at the origin and hence the stopping times
$\tau_1,\tau_2,\ldots$ are not observable.  Instead, we must
construct stopping times based on the observations which are able to
stop the random walk close to some point of reference.

Let $K=[k_1,k_2]\subset[-n,n]$ be an integer interval of length $n$.
The precise position of $K$ within $[-n,n]$ is not 
important here and will be specified later.  Our point of reference will be
the location of the finite string
$w:=\xi_{k_1}\xi_{k_1+1}\ldots\xi_{k_2}$. Let $\tau_i$ be the $i$-th
time that we observe the pattern $w$ in the observations $\chi$.
Hence 
\begin{equation}
\label{tau1}
\tau_1:=\min\{ t\geq n \st \chi_{(t-n)}\chi_{(t-n+1)}\ldots\chi_{(t)}=w\}
\end{equation}
and
\begin{equation}
\label{taui}
\tau_{i+1} := \min\{t>\tau_i \st \chi_{(t-n)}\chi_{(t-n+1)}\ldots\chi_{(t})=w\}.
\end{equation}
Let $R$ be a map $R:[0,n]\rightarrow\Z$. We call $R$ a {\it simple
random walk path}, if
$$
\forall t\in[0,n-1] \;,\; \abs{R(t+1)-R(t)}=1.
$$
If $R(0)=x$, we say the path $R$ starts at $x$. We assume that the
scenery $\xi$ is random, i.i.d. and all five colors appear with
equal probability. Let $x\notin[-2n,2n]$. Let $R$ be a non-random
simple random walk path starting at $x$. What is the probability
that $\xi$ seen along the path $R$ gives the string $w$?  In other
words, what is the probability that 
$$
\xi\circ R := \xi(R_0)\xi(R_1)\ldots\xi(R_n)=w?
$$
Note that since $x\notin[-2n,2n]$, the path of $R$ can not reach $[-
n,n]$. Since the scenery is i.i.d. and since $w$ only depends on
$\xi\vert_{[-n,n]}$, the pattern $w$ is independent of $\xi\circ R$. The
five colors in the scenery having the same probability, we find
$$
P(\xi\circ R=w)=\left(\frac15\right)^{n+1}.
$$
There are $2^n$ simple random walk paths starting at $x$ of length
$n$. Hence the probability that there exists such a path generating 
the color record $w$ is bounded:
$$
P(\exists R \mbox{ a simple r.w. path starting at $x$ such that }
 \xi\circ R=w) \leq \left(\frac15\right)^{n+1}\cdot 2^n<
\left(\frac25\right)^n.
$$

It follows that with probability close to one, every time we observe
the pattern~$w$ before the time $T=\rho^n$, the random walk must be in
the interval $[-2n,2n]$:
$$
S_{\tau_i}\in[-2n,2n]\mbox{ for every }\tau_i\leq T,
$$
where $\rho>0$ is a constant not depending on $n$ such that
$\rho<(5/2)^2$.

We shall see that this argument can be refined so that the interval
in which $S_{\tau_i}$ lies is much narrower. Also, we shall be
dealing with non-simple random walks. Hence it will be necessary to
adapt the present argument to that situation.

\section{Single Point Reconstruction} 
\label{sec:singlepointreconstruction}

In this section, we describe the algorithm for reconstructing $\xi(n+1)$
given the observations~$\chi$ and the restriction
$\xi\vert_{[-n,n]}$.
Before beginning, it is useful to consider a small numerical example to illustrate.

\subsection{ A numerical example}

Assume that the random walk is simple and suppose we are given the
finite restriction of $\xi$ to the interval $I:=[-4,4]$:
$$
\begin{array}{c|cr|r|r|r|c|c|c|c|c|c}
\xi(z)& & 2&4 &3 &2 &4&5&1&5&3&? \\
\hline
z     & &-4&-3&-2&-1&0&1&2&3&4&5
\end{array}
$$
We wish to reconstruct the value $\xi(5)$. Let $K:=[0,3]$ and let
$w$ be the restriction of $\xi$ to $K$:
$$
w:=4515.
$$
Occurrences of the pattern~$w$ in the observations~$\chi$ define a
sequence of stopping times $(\tau_i)$ like that defined by
(\ref{tau1}) and (\ref{taui}). Note that if we observe the pattern
$w$ and this was generated while the random walk was in the interval
$I$, then we must be located at either $z=1$ or $z=3$. More
precisely, if $S_s\in I$ for all $s\in[\tau_i-3,\tau_i]$, then
$S_{\tau_i}\in\{1,3\}$ and the random walk will be at one of the
points $1$ or $3$ with equal probability. Let $\mu$ denote the two-
atom measure which accords probability $1/2$ to $\{1\}$ as well as
to $\{3\}$. Let $P_{\mu}(.)$ denote the probability distribution for
the random walk $S$ starting with initial distribution $\mu$ instead
of starting at the origin.

Take $r=4$. As was done in (\ref{qe}), we define
\begin{equation}
q_e:=P_{\mu}\left(\xi(S_r)=e\right)-
P_\mu\left(\xi(S_r)=e,\xi(S_r)\in [-4,4]\right).
\end{equation} 
Then, we look at the empirical frequency of the colors at a fixed
offset~$r$ following each stopping time	 $\tau_i$. More
precisely, we estimate $P_\mu(\xi(S_r)=e)=P_\mu(\chi_r=e)$ by the
empirical distribution of
$$
\chi_{\tau_1+r},\chi_{\tau_2+r},\ldots,\chi_{\tau_j+r},
$$
 where
$j$ is the largest integer $i$ such that $\tau_i\leq T$.
Let us denote this empirical distribution by
\begin{equation}
\label{hatP}
\hat{P}_\mu(\chi_r=\cdot).
\end{equation}
We can now describe the reconstruction algorithm.

\smallskip\noindent
\textbf{Algorithm for single point reconstruction.}
Given $\xi\vert_{[-4,4]}$ and $\chi$, our estimate of $\xi(5)$ is the element $e$ of the set
$\{1,2,3,4,5\}$ which maximizes the quantity
$$
\hat{q}_e:=\hat{P}_\mu(\chi_r=e)-P_\mu\left(\xi(S_r)=e,S_r\in [-4,4] \right).
$$
\smallskip

Note that the second term in the definition of $\hat q_e$ can be 
calculated explicitly because $\xi\vert_{[-4,4]}$ is given.

To verify that the above algorithm has a high probability of
estimating $\xi(5)$ correctly, we first need to check that
\begin{equation}
\label{conditionexample}
P_\mu(S_r=5)>P_\mu(S_t\notin[-4,5]).
\end{equation}
This inequality corresponds to Condition 
\ref{conditionforreconstrutpoint}.
Taking $r=4$ we find
$$
P_\mu(S_r=5)=\frac5{32}, 
\qquad P_\mu(S_r\in (-\infty,-5]\cup[6,\infty))
=\frac1{32}
$$
and so inequality \ref{conditionexample} is satisfied.

The second problem we need to take care of is verifying that the
estimate \ref{hatP} is precise enough with high probability. For the
algorithm to work, the estimation error needs to be strictly less
than
\begin{equation}
\label{boundforerror}
\frac{P_\mu(S_r=5)-P_\mu(S_r\notin[-4,5])}2.
\end{equation}
For this we need to show that there exists~$T$ such that the
following two conditions are satisfied with high probability:
	
\begin{itemize}
\item
The time $T$ is large enough so that
 there are enough visits by the random walk to the origin
generating the pattern $w$ up to time $T$.
\item
The time $T$ is not too large, since otherwise during 
the time interval $[0,T]$ the random walk will wander too far away
from the origin. Far from the origin there are other places
where the pattern $w$ can be generated. Thus, too large a $T$
results in $S_{\tau_i}$ being distant from the origin at some of the times $\tau_i\in[0,T]$.
\end{itemize}

In the numerical example used to illustrate above, $n$ has been taken
too small for the required estimate precision to be obtained. One of the main issues
in the general context is to show that for $n$ large enough,
we can find a~$T$ such that with high probability the two 
 conditions above can be satisfied simultaneously. 

\subsection{The one-point reconstruction algorithm}
\label{thealgorithm}

Now we present an algorithm
for estimating $\xi(b+1)$ given the restriction $\xi\vert_{[-n,n]}$ and
the observations $\chi$.

Unlike the numerical example above, our problem is
to reconstruct the scenery when the random walk is not simple.
for this we shall need the concept of a $\delta$-path.
The key feature of a $\delta$-path is that it behaves like a simple
random walk most of the time and the proportion~$\delta$ of time
that it doesn't do so, its freedom of movement is linearly
restricted pro rata.

\begin{definition}
Let $\delta>0$. We call $R:[0,n]\rightarrow \Z$ a $\delta$-path
(of length $n$) if the proportion of steps of non-unit length as well as
 their total variation is less than $\delta$, that is,
$$
\abs M \leq \delta n
$$
and
$$
\sum_{t\in M} \abs{R(t+1)-R(t)} \leq \delta n,
$$
where the set $M$ is defined by
$$
M:=\{t\in[0,n-1] \st \abs{R(t+1)-R(t)} \neq 1\}.
$$
\end{definition}

For the Reconstruction, we shall use the stopping times $\tau_i$ that occur prior to time $T=2.4^{2n}$.
We will subsequently show that by taking the parameter
$\epsilon>0$ small enough, the random walk can be made to follow $\delta $-paths for any time interval
of length $n$ before time~$T$.

Our algorithm for reconstructing $\xi(n+1)$ is defined using three
intervals. First
$$
I:=[-n,n]
$$
is the interval on which we already know the scenery $\xi$.  Let 
$$
n^*:=n-61\delta n
$$
where $\delta>0$ is a small constant not depending on $n$.  The second interval
$$
K=[k_1,k_2]:=[n^*-n,n^*]
$$
is the interval from which we take the pattern
$$
w:=\xi(k_1)\xi(k_1+1)\xi(k_1+2)\ldots\xi(k_2).
$$
Finally we will show that after reading the pattern $w$ close to the
origin we are typically located in the interval 
$$
J:=[n^*-21\delta n,n^*+\delta n].
$$

Next, let $\nu_i$ be the $i$-th time the random walk generates
the pattern $w$ on the piece of scenery $\xi\vert_{[-n,n]}$:
\begin{equation}
\label{nu1}
\nu_1:=\min\{ t\geq n \st \chi_{t-n}\chi_{t-n+1}
\ldots\chi_t=w\;;\;
\forall s\in[t-n,t],\; S_s\in[-n,n]
\}
\end{equation}
and 
\begin{equation}
\label{nu_i}
\nu_{i+1}:=\min\{t>\nu_i \st \chi_{t-n}\chi_{t-n+1}\ldots
\chi_t=w\;
,\;
S_s\in[-n,n],\forall s\in[t-n,t]
\},
\qquad i\geq1.
\end{equation}
The difference between the $\nu_i$'s and the $\tau_i$'s is the
additional constraint on the $\nu_i$'s that the pattern should be
generated while the random walk is in $[-n,n]$. This is apriori not
observable. But we will show that the $\tau_i$'s and the $\nu_i$'s
coincide up to time~$T$ with high probability.

Note that given~$\xi$, the sequence
$$
S_{\nu_1},S_{\nu_2},S_{\nu_3},\ldots 
$$
is a Markov chain whose state space is $[-n,n]$. The transition
probability from $x$ to $y$ is given by 
$$
P_{xy}:=P(S_{\nu_{i+1}}=y \given S_{\nu_i}=x,\xi).
$$

This chain is aperiodic and irreducible. Denote its stationary
distribution by~$\mu$. Hence,~$\mu$ depends on~$\xi$ and is thus a
random measure. As before, $P_\mu(\cdot)$ denotes the measure for
which the random walk $S$ has starting distribution $\mu$ and for
which the distribution of the scenery remains unchanged and is
independent of the random walk. We take $r:=90\delta n$ and look at
the frequency of the observed color~$r$ time steps after a stopping
time. Our estimate for the distribution $\sL_\mu\left(\chi_r \right)$
is the empirical distribution of
$$
\xi(S_{\tau_i+r}),\xi(S_{\tau_2+r}), \ldots ,\xi(S_{\tau_j+r}),
$$
where $j$ denotes the largest integer for which $\tau_i\leq T$.
We shall denote this empirical distribution by 
$$
\hat{P}_\mu(\chi_r=\cdot).
$$

We are now ready to describe the reconstruction algorithm for estimating the value $\xi(n+1)$.

\smallskip\noindent
{\bf Algorithm:}
Given $\xi\vert_{[-n,n]}$ and $\chi$, the estimate $\hat{\xi}(n+1)$ of
$\xi(n+1)$ is the element $e \in\{1,2,3,4,5\}$ which maximizes the
quantity
$$
\hat{q}_e:=\hat{P}_\mu(\chi_r=e)-P_{\mu}\left(\xi(S_r)=e\;,\;S_r\in I\right).
$$
Note that the second term on the right-hand side of the definition of $\hat q_e$ can be
calculated explicitly since we know $\xi\vert_I$.

\subsection{Combinatorial aspects of the one point Reconstruction algorithm}

Let $A^n$ denote the event that the above reconstruction algorithm
works correctly, that is the event that $\hat{\xi}(n+1)=\xi(n+1)$.
We list a few events which are important in making the reconstruction
of $\xi(n+1)$ work.

\begin{description}
\item[$B^n$:]
Performing reconstruction will require the random walk to
remain near the origin, in some sense, for a sufficiently long time.
To capture this requirement, let $B^n$ be the event that the random
walk stays in the interval  $[-2.45^n,2.45^n]$ for all times prior to
and including $T=2.4^{2n}$.

\item[$C^n$:]
In order to obtain sufficiently precise estimates, we will need
enough observations close to the origin. This is taken care of by
$C^n$ which is the event that at least $(1.1)^n$ stopping times
$\nu_i$ occur before time~$T$:
$$
C^n:=\{\nu_i\leq T	\st i\leq 1.1^n\}.
$$

\item[$D^n$:]
We take $D^n$ to be The event that, up to time $T$, all pieces of
length~$n$ of the path~$S$ are $\delta$-paths. More precisely, $D^n$ is the
event that for all $s\in[n,T]$, we have 
$$
S:[s-n,s]\to\Z \quad t\mapsto S_t
$$
is a $\delta$-path. 

\item[$F^n$:]
Let $F^n$ be the event that in the interval $[-2.45^n,2.45^n]$
a $\delta$-path can generate $w$ only if it ends in the interval
$J$ and does not leave the interval $I$.  
More precisely, $F^n$ is the event that,
For all $\delta$-paths 
$$
R:[0,n]\rightarrow [-2.45^n,2.45^n]
$$
for which $\xi\circ R=w$, we have
$R(n)\in J$ and $R(l)\in[-n,n]$, for all $l\in[0,n]$.

\item[$G^n$:]
Let $G^n$ be the event that the precision of the estimate
based on the $\nu_i$'s is better than 
\begin{equation}
\label{boundforerror2}
\frac{P_\mu(S_r=n+1)-P_\mu(S_r\notin[-n,n+1]}2.
\end{equation}
More precisely, denote the empirical distribution of
$$
\chi(\nu_1+r),\chi(\nu_2+r),\ldots,\chi(\nu_j+r)
$$
where $j$ is the largest $i$ for which $\nu_i\leq T$, by 
$$
\tilde P_\mu(\chi_r=\cdot).
$$
Recall that~$r$ is defined to be
$90\delta n$. Then $G^n$ is the event that, for all $e\in\{1,2,3,4,5\}$, the difference
$$
\abs{\tilde P_\mu(\chi_r=e)-P_\mu(\chi_r=e)}
$$
is strictly less than the expression given in~\ref{boundforerror2}.  
\end{description}

The following lemma shows that when all the events $B^n$ through $G^n$ hold, 
then we can reconstruct $\xi(n+1)$ correctly.  

\begin{lemma}
\label{lem:boundForA^n}
Assume that expression \ref{boundforerror2}
is strictly positive. Then,
$$
B^n\cap C^n \cap D^n \cap F^n \cap G^n\subset A^n.
$$
\end{lemma}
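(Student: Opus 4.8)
The plan is to prove that on $B^n\cap C^n\cap D^n\cap F^n\cap G^n$ the colour $e\in\{1,2,3,4,5\}$ maximising $\hat q_e$ is $\xi(n+1)$, which is precisely the event $A^n$. Write $e^*:=\xi(n+1)$, let $\Delta:=P_\mu(S_r=n+1)-P_\mu(S_r\notin[-n,n+1])$, and note that $\Delta/2$ is the expression~\ref{boundforerror2}, which is strictly positive by hypothesis; in the present notation (with $Y_i=S_r$ under $P_\mu$, $[a,b]=I$ and $b+1=n+1$) this positivity is Condition~\ref{conditionforreconstrutpoint}. First I would record the dichotomy of Section~\ref{sec:simpleprobs}: with $q_e:=P_\mu(\xi(S_r)=e)-P_\mu(\xi(S_r)=e,\,S_r\in I)=P_\mu(\xi(S_r)=e,\,S_r\notin I)$, splitting off the atom at $n+1$ gives $q_{e^*}\ge P_\mu(S_r=n+1)$ while $q_e\le P_\mu(S_r\notin[-n,n+1])$ for every $e\ne e^*$, so $q_{e^*}$ exceeds each other $q_e$ by at least $\Delta$.

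The crux is the second step: on $B^n\cap D^n\cap F^n$ the unobservable stopping times $\nu_i$ and the observable ones $\tau_i$ agree up to time $T$. Every $\nu_i$ is a $\tau_i$ by definition, so only the reverse inclusion is at issue. Suppose $\tau_i\le T$. Then $\chi_{\tau_i-n}\cdots\chi_{\tau_i}=w$, hence the path segment $R:[0,n]\to\Z$, $R(l):=S_{\tau_i-n+l}$, satisfies $\xi\circ R=w$; by $B^n$ it is valued in $[-2.45^n,2.45^n]$, and by $D^n$ it is a $\delta$-path. Event $F^n$ then forces $R(l)\in[-n,n]$ for all $l\in[0,n]$, i.e.\ $S_s\in[-n,n]$ for all $s\in[\tau_i-n,\tau_i]$, so $\tau_i$ also satisfies the defining condition of the $\nu$'s. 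Thus $\{\tau_i:\tau_i\le T\}$ and $\{\nu_i:\nu_i\le T\}$ coincide as ordered sequences, whence $\hat P_\mu(\chi_r=\cdot)=\tilde P_\mu(\chi_r=\cdot)$; event $C^n$ guarantees this common set has size at least $1.1^n$, so the empirical distributions are genuinely defined.

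For the third step, note $\hat q_e-q_e=\hat P_\mu(\chi_r=e)-P_\mu(\chi_r=e)$ (since $\chi_r:=\xi(S_r)$), so by the previous step and $G^n$ we get $\abs{\hat q_e-q_e}=\abs{\tilde P_\mu(\chi_r=e)-P_\mu(\chi_r=e)}<\Delta/2$ for every colour $e$. Combining this with the separation of the true quantities: $\hat q_{e^*}>q_{e^*}-\Delta/2\ge P_\mu(S_r=n+1)-\Delta/2$, while $\hat q_e<q_e+\Delta/2\le P_\mu(S_r\notin[-n,n+1])+\Delta/2$ for $e\ne e^*$. Both bounds equal $\tfrac12\bigl(P_\mu(S_r=n+1)+P_\mu(S_r\notin[-n,n+1])\bigr)$, so $\hat q_e<\hat q_{e^*}$ for all $e\ne e^*$; hence the algorithm returns $\hat\xi(n+1)=e^*=\xi(n+1)$, which is $A^n$.

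I expect the genuine obstacle to be the second step, since it is the only place where the geometric information carried by $B^n$, $D^n$ and $F^n$ has to be assembled into the statement that no occurrence of $w$ before $T$ is produced by a path escaping $[-n,n]$; the first and third steps are, once the notation is set up, just the dichotomy and a triangle inequality.
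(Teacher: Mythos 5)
Your proof is correct and follows essentially the same route as the paper: show $\tau_i=\nu_i$ up to time $T$ via $B^n$, $D^n$ and $F^n$, invoke $C^n$ and $G^n$ for the accuracy of the empirical estimate, and conclude with the threshold argument at $a=\tfrac12\bigl(P_\mu(S_r=n+1)+P_\mu(S_r\notin[-n,n+1])\bigr)$. If anything, your second step is slightly more careful than the paper's, which omits to say explicitly that $D^n$ is what makes the segment $S\vert_{[\tau_i-n,\tau_i]}$ a $\delta$-path so that $F^n$ applies.
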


\begin{proof}
If $B^n$ holds, then the random walk stays within the interval 
$[-2.45^n,2.45^n]$ for all times up to and including~$T$. However, thanks to the event $F^n$, a 
$\delta$-path within that interval can generate the pattern $w$ only if it stays within 
the interval $[-n,n]$. This means that when $B^n$ and $F^n$ both hold, 
$\tau_i=\nu_i$ for all $\tau_i\leq T$. In this situation, the estimates of 
$P_\mu(\chi_r=e)$ based on the $\tau_i$'s and the $\nu_i$'s are identical, Since 
they only make use of stopping times up to time~$T$.  The event $C^n$ merely ensures 
the occurrence of at least $1.1^n$ stopping times by time~$T$. When $G^n$ 
holds, we have
$$
\abs{\tilde{P}(\chi_r=e)-P_\mu(\chi_r=e)}
$$
is less than \ref{boundforerror2} and hence for the estimate based on
the $\tau_i$'s we  have that
\begin{equation}
\label{expressione}
\abs{\hat{P}(\chi_r=e)-P_\mu(\chi_r=e)}
\end{equation}
is also less than \ref{boundforerror2}. But this is enough to 
make the reconstruction algorithm work.
To see this, let $a$ be the number
$$
a:=\frac{P_\mu(S_r=n+1)+P_\mu(S_r\notin[-n,n+1]}{2}.
$$
We have proved that when $B^n$, $C^n$, $D^n$, $F^n$ and $G^n$ all hold,
then
$$
\hat{P}(\chi_r=e)-P_\mu(\chi_r=e, S_r\in[-n,n])>a
$$
if $$\xi_{n+1}=e$$ and
$$
\hat{P}(\chi_r=e)-P_\mu(\chi_r=e, S_r\in[-n,n])<a
$$
if $\xi_{n+1}\neq e$.
This means that the reconstruction algorithm works
correctly, since it chooses as estimate for the color
$\xi_{n+1}$ the value $e\in\{1,2,3,4,5\}$ which maximizes
$$
\hat{P}(\chi_r=e)-P_\mu(\chi_r=e, S_r\in[-n,n]).
$$
This completes the proof.
\end{proof}

\section{High probability of the events $B^n$, $C^n$, $D^n$, $F^n$ and $G^n$}

As a consequence of Lemma~\ref{lem:boundForA^n}, we see that
$$
P(A^{nc}) \leq P(B^{nc})+P(C^{nc})+P(D^{nc})+P(F^{nc})+P(G^{nc}),
$$
so in order to show that $A^n$ occurs with probability close to~$1$, we need 
only show that each of $B^n$, $C^n$, $D^n$, $F^n$ and $G^n$ occur with 
probability approaching~$1$ as~$n$ becomes large.  We shall do this by showing 
that the probability that each of these events does not occur is finitely summable 
over~$n$.

\subsection{The event $B^n$}
\label{subsec:b^n}

Let $\sigma^2$ be the variance of the increment distribution of the random 
walk~$S$. Conditions~(\ref{conditionS1}) and~(\ref{conditionS2}) ensure that 
$\sigma^2$ is finite.  Then, since $S_t$ is a sum of i.i.d. increments, direct application of 
Kolmogorov's inequality yields
\begin{eqnarray*}
P(B^{nc})
&=& P(\max_{0\leq t\leq T}\abs{S_t} \geq 2.45^n+1) \\
&\leq& \frac{2.4^{2n}\sigma^2}{\bigl(2.45^n+1\bigr)^2}
< \sigma^2 \left( \bigl( 2.4/2.45 \bigr)^2 \right)^n.
\end{eqnarray*}
Hence $P(B^{nc})$ is finitely summable and $P(B^n)\rightarrow1$ exponentially fast as $n\rightarrow\infty$.

\subsection{The event $C^n$}

We define some auxilliary events to assist in proving that $C^n$ occurs with high probability.  Let $C^n_0$ be the event that the random walk has exited the interval 
$[-(2.39)^n,(2.39)^n]$ at time~$T$:
$$
C^n_0:= \{S_T\notin[-2.39.^n,2.39^n]\}.
$$

Next, let $C^n_1$ be the event that there are at least $2.3^n$ visits to
the point $k_1=n-n^*$ before the random walk leaves the interval $[-
2.39^n,2.39^n]$ for the first time.

Define $C^n_2$ to be the event that among the first $2.3^n$ visits
to $k_1$ there are at least $1.1^n$ for which the random walk
subsequently takes $n$ steps to the right. That is, if $t_i$ denotes
the time of the $i$-th visit to $k_1$, then, $C^n_2$ is the event
that
$$
\abs{\{i\in\N \st i\leq 2.3^n\;,\;S_{t_i+s}=k_1+s,\ \forall s\in[0,n]\}}
\geq 1.1^n.
$$

It is easy to see that
\begin{equation}
\label{C}
C^n_0\cap C^n_1\cap C^n_2\subset C^n.
\end{equation}
Thus, in order to prove that $C^n$ occurs with high probability, we need only 
prove that each of the three events $C^n_0$, $C^n_1$ and $C^n_2$ occurs with 
high probability.

\subsubsection*{Proof that $C^n_0$ occurs with high probability}

Let $\rho$ denote the third absolute moment of the increment distribution of~$S$ 
and note that, as was the case for~$\sigma^2$, conditions~(\ref{conditionS1}) and~(\ref{conditionS2}) guarantee 
$\rho<\infty$.  Then, fixing $b=2.39/2.4$, we have
\begin{eqnarray*}
P(C_0^n)
&=& P(S_T \notin [-2.39^n, 2.39^n])
= P\left( \frac{S_{2.4^{2n}}}{2.4^n\sigma} \notin [-b^n, b^n]/\sigma \right) \\
&=& 1-\Phi_{2.4^{2n}}\left(b^n/\sigma\right) + \Phi_{2.4^{2n}}\left(-b^n/\sigma\right),
\end{eqnarray*}
where $\displaystyle \Phi_t(x) := P\left(
\frac{S_t}{\sigma t^{1/2}} \leq x\right)$.  Let~$\Phi$ denote
the distribution function of a standard Gaussian random variable.  
Now, $\Phi_t\weaklyto Phi$ as $t\to\infty$ by the central limit
theorem. Since $b^n\rightarrow 0$, we see that
$$
\lim_{n\to\infty} P(C_0^n) = 1.
$$
Furthermore,
\begin{eqnarray*}
1-P(C_0^n)
&=& \Phi_T(B^n/\sigma)-\Phi_T(-b^n/\sigma) \\
&\leq& \Phi(B^n/\sigma)-\Phi(-b^n/\sigma) + \abs{\Phi_T(B^n/\sigma)-\Phi(b^n/\sigma)} + \\
&& \qquad \abs{\Phi_T(-B^n/\sigma)-\Phi(-b^n/\sigma)}.
\end{eqnarray*}
By the Berry-Ess\'een theorem,
$$
\abs{\Phi_t(x)-\Phi(x)} 
\leq \frac{\beta\rho}{\sigma^3 t^{1/2}},
$$
for all $x\in\R, t\geq1$, where $\beta=0.7655$.  Therefore,
\begin{eqnarray*}
1-P(C_0^n)
&\leq& 2\int_0^{b^n/\sigma} (2\pi)^{-1/2}e^{-u^2/2}\,du +\frac{2\beta\rho}{2.4^n\sigma^3}.
\end{eqnarray*}.
Since $\int_0^h (2\pi)^{-1/2}e^{-u^2/2}\,du = \frac1{\sigma\sqrt{2\pi}}h + o(h^3)$, we then have
$$
1-P(C_0^n) 
\leq 2(2\pi\sigma^2)^{-1/2}b^n+o(b^{3n}) 
+ \frac{2\beta\rho}{2.4^n\sigma^3}.
$$
When~$n$ is large enough, the right-hand side will be bounded by
$2(2\pi)^{-1/2}\sigma^{-1}b^{n/2}$ and so $P(C_0^n)$ converges
exponentially quickly to~$1$.

\subsubsection*{Proof that $C_1^n$ occurs with high probability}
	
Let us imagine for a moment that the random walk~$S$ is simple, symmetric
and starts at $k_1+1$. Let $\eta$ be the first time
that~$S$ hits $k_1$ or $2.39^n$:
$$
\eta=\min\{t\geq 0 \st S_t\in\{k_1,2.39^n\}\;\}.
$$
The random walk is a Martingale and hence we have that
\begin{equation}
\label{k1S0}
k_1+1 = \E[S_0]=\E[S_\eta]=k_1\cdot P(S_\eta=k_1)+
(2.39^n)P(S_{\eta}=2.39^n)
\geq 2.39^nP(S_\eta=2.39^n).
\end{equation}
Hence, $P(S_{\eta}=2.39^n)\leq \frac{k_1+1}{2.39^n}$ and the simple random walk 
starting at $k_1+1$ has a probability of less then $(k_1+1)(2.39)^{-n}$ of 
hitting $2.39^n$ before hitting~$k_1$. The same bound holds for a simple random 
walk starting at $k_1-1$ and its  probability of hitting $-2.39^n$ before 
returning to $k_1$.  We note that it is possible to obtain a slightly better 
bound, but this suffices here. Hence the number of visits to the point $k_1$ by 
the random walk before hitting either $-2,39^n$ or $2.39^n$ is a geometric 
random variable with parameter $p\leq (k_1+1)2.39^{-n}$. It follows that the 
probability of  making fewer than $2.3^n$ visits to the point $k_1$ before 
hitting the set $\{-2.39^n,2.39^n\}$ is less than 
$(k_1+1)\left(\frac{2.3}{2.39}\right)^n$. This imposes a bound on $P(C^{nc}_1)$ 
that is negatively exponentially small in~$n$ if the random walk is simple. So 
we need to explain why the same kind of result holds for our random walk~$S$ 
whose increment distribution has exponentially decaying tails.

This is done by considering a random walk~$\bar{S}$, which is generated by~$S$ 
and which makes steps of length at most $n^2$. Whenever the random walk $S$ 
jumps further than this, $\bar{S}$ does not move. Hence, $S_0=\bar{S}_0$ and for all $t\in\N$,
$$
\bar{S}_t-\bar{S}_{t-1}
:= \left\{\begin{array}{ll}
S_t-S_{t-1}, & \mbox{if } \abs{S_t-S_{t-1}}\leq n^2, \\
0, & \mbox{otherwise.}
\end{array}\right.
$$
The key here is that the random walks~$S$ and~$\bar{S}$ will very likely be 
identical within a time frame of interest.  To capture this, we introduce 
$C^n_{11}$ which is the event that
$$
S_t=\bar{S}_t\;,\;\forall t\leq 7^n.
$$
Let $C^n_{12}$ be the event that the random walk $\bar{S}$ leaves the interval
$[-2.39^n,2.39^n]$ no later than 
$7^n$.\\
We define the stopping times $\bar{t_i}$ inductively.
Let $\bar{t}_1=0$ and $\bar{t}_{i+1}$
be the first time no earlier than $\bar{t}_i+n^4$ that the random
walk $\bar{S}$ visits the interval $-[n^2,n^2]$:
$$
\bar{t}_{i+1}:=\min\{t\geq \bar{t}_i+n^4 \st \bar{S}_t\in[-n^2,n^2]\}.
$$
Let $C^n_{13}$ be the event that at least $2.38^n$ stopping times $\bar{t}_i$ 
occur before $\bar{S}$ leaves the interval $[-2.39^n,2.39^n]$. \\
Let $\{Y_i\}_1^\infty$ be a sequence of Bbernoulli random variables marking visits to the point $k_1$ at specific stopping times:
 $Y_i:=1$ iff $\bar{S}_{\bar{t}_i+n^4}=k_1$. \\
 finally, define $C^n_{14}$ to be the event that
$$
\sum_{i=1}^{2.38^n}Y_i\geq 2.3^n.
$$

Note that
\begin{equation}
\label{CCC}
C^n_{11}\cap C^n_{12}\cap C^n_{13}\cap C^n_{14}\subset C^n_1.
\end{equation}
The following argument explains why this is so.  Due to $C^n_{12}$, the random walk leaves the interval $[-2.39^n,2.39^n]$ before 
time $7^n$ and by $C^n_{13}$ there are at least $2.38^n$ stopping times 
$\bar{t}_i$ before $\bar{S}$ leaves the interval $[-2.39^n,2.39^n]$. Thus, 
$C^n_{12}$ and $C^n_{13}$ together imply that at least $2.38^n$ stopping times 
$\bar{t}_i$ will be seen before time $7^n$. The event $C^n_{14}$ forces at least 
$2.3^n$ of these $2.38^n$ stopping times $\bar{t}_i$ to be followed by a visit 
to the point $k_1$ at a time $n^4$ later. Hence, we have that if $C^n_{12}$, 
$C^n_{13}$ and $C^n_{14}$ all hold, then prior to time $7^n$, the random walk 
$\bar{S}$ visits the point $k_1$ at least $2.3^n$ times before finally leaving 
the interval $[-2.39^n,2.39^n]$. The event $C^n_{11}$ stipulates that $S$ and 
$\bar{S}$ be identical up to time $7^n$.  Consequently, $S$ also visits $k_1$ at 
least $2.3^n$ times before leaving the interval $[-2.39^n,2.39^n]$ and so 
$C^n_1$ holds.

From the implication \ref{CCC} it follows that
$$
P(C^{nc}_1)\leq P(C^{nc}_{11})+P(C^{nc}_{12})+P(C^{nc}_{13})+P(C^{nc}_{14}).
$$
To prove finite summability  of $P(C^{nc}_1)$ it is enough to prove it for 
each term on the right-hand side of this inequality.

\paragraph*{\bf Finite summability of $P(C^{nc}_{11})$.}
Note that the event $C^n_{11}$ holds as soon as
the random walk $S$ does not take any step of size larger than $n^2$ up to and including time $7^n$.
Since the tail of the increment distribution of~$S$ is exponentially decaying, we have
$$
P(\abs{S_t-S_{t-1}}>n^2)\leq c_1e^{-c_2n^2},
$$
where $c_1,c_2>0$ are constants not depending on $n$.  Thus,
$$
P(C^{nc}_{11})
\leq c_17^ne^{-c_2n^2}
$$
Provided $c_2$ is large enough, the expression on the right-hand side of this inequality
is indeed finitely summable.

\paragraph*{\bf Finite summability of $P(C^{nc}_{12})$.}
To see the finite summability of $P(C^{nc}_{12})$, first realize that
$$
C^n_{12} = \{\max_{0\leq t\leq 7^n}\abs{\bar S_t} > 2.39^n\}
\supset \{\bar S_{7^n} \notin [-2.39^n,2.39^n]\}
$$
and then emulate the proof that $P(C^{nc}_0)$ is finitely summable.

\paragraph*{\bf Finite summability of $P(C^n_{13})$.}
At any stopping time $\bar{t}_i$, the random walk~$\bar S$ will be in the 
interval $[-n^2,n^2]$. Since~$\bar S$ can make steps of size no larger than 
$n^2$, it can travel a maximum distance of $n^6$ from where it started in 
elapsed time $n^4$.  At time $\bar{t}_i+n^4$, $\bar S$ will therefore be in $[-
n^2-n^6,n^2+n^6]$.  If $\bar S\in[-n^2,n^2]$ at time $\bar t_i+n^4$, then $\bar t_{i+1}=\bar t_i+n^4$ and so it won't have left $[-2.39^n,2.39^n]$ by $\bar t_{i+1}$.  On the other hand,
suppose that~$\bar{S}$ starts at a point $x_0\in 
(n^2,n^2+n^6]$ and let $\eta$ be the first time that it enters 
$[2.39^n,2.39^n+n^2]$ or $[0,n^2]$.  Then
\begin{equation}
\label{x0}
x_0=E_{x_0}[\bar{S}_0]=E_{x_0}[\bar{S}_{\eta}]
=x_2P_{x_0}(\bar{S}_\eta\in[2.39^n,2.39^n+n^2])+x_1P_{x_0}(\bar{S}_\eta\in[0,n^2]),
\end{equation}
where $x_2$ is the conditional expectation of $\bar{S}_\eta$ given
that $\bar{S}_\eta\in[2.39^n,2.39^n+n^2]$
and $x_1$ denotes the conditional expectation
of $\bar{S}_\eta$ given that $\bar{S}_\eta\in[0,n^2]$.
From equation \ref{x0} it follows that
$$
x_0\geq x_2P(\bar{S}_\eta\in[2.39^n,2.39^n+n^2])
$$
and, since $x_2\geq 2.39^n$ and $x_0\leq n^2+n^6$, we have
\begin{equation}
\label{jenesais}
P(\bar{S}_\eta\in[2.39^n,2.39^n+n^2])\leq \frac{n^2+n^6}{2.39^n}.
\end{equation}
An identical argument yields the same bound for the random walk hitting $[-
2.39^n-n^2,-2.39^n]$ before $[-n^2,0]$ given that $\bar S$ is in $[-n^2-n^6,-
n^2]$ at time $\bar t_i+n^4$. Hence for all stopping times $\bar t_i$, the 
expression on the right-hand side of \ref{jenesais} also serves to bound the 
probability of the random walk leaving $[-2.39^n,2.39^n]$ before paying a visit 
to $[-n^2,n^2]$ after time $\bar{t}_i+n^4$. This implies that the number of 
stopping times $\bar{t}_i$ appearing before~$\bar{S}$ exits $[-2.39,2.39^n]$ is 
a geometric random variable with parameter $p\leq (n^2+n^6)/2.39^n$. Hence, the 
probability $P(C_{13}^{nc})$ of seeing fewer than $2.38^n$ stopping times $\bar{t}_i$ before the 
random walk leaves the interval $[-2.39^n,2.39^n]$ is less than
$$
\left((n^2+n^6)\frac{2.38}{2.39}\right)^n.
$$
This shows that $P(C^{nc}_{13})$ is finitely summable.

\paragraph*{\bf Finite summability of $P(C^n_{14})$.}
Let $\sF_i$ be the $\sigma$-algebra generated
by $\bar{S}_0,\bar{S}_1,\ldots,\bar{S}_{\bar{t}_i}$. Then by 
the Local Central Limit Theorem, we find that for $n$ large enough,
there exists a constant $c_4>0$ (not depending on $n$)  such that
$$
P(\bar{S}_{\bar{t}_i+n^4}=k_1 \given \sF_i)\geq \frac{c_4}{n^2}
$$
almost surely. (We use the fact that at time $\bar{t}_i$ the random
walk $\bar{S}$ is located in the interval $[-n^2,n^2]$.)
This then implies that the sum
$$
\sum_{i=1}^{2.38^n}Y_i
$$
is bounded below by
$$
\sum_{i=1}^{2.38^n}Y_i^*
$$
where $Y_i^*$ are i.i.d. Bernoulli random variables with parameter $c_4/n^2$. 
Thus, we have 
\begin{equation}
\label{CN4}
P(C^{nc}_{14})
\leq P\left(\sum_{i=1}^{2.38^n}Y_i^*<2.3^n\right)
=P\left(\sum_{i=1}^{2.38^n}(1-Y_i^*) \geq 2.38^n-2.3^n+1\right).
\end{equation}
Note that the $(1-Y_i^*)$'s remain i.i.d. Bernoulli but have parameter $1-c_4/n^2$.
The second non-central moment of 
$$
\sum_{i=1}^{2.38^n}(1-Y_i^*)
$$
is equal to
$2.38^n(1-c_4/n^2)$.  Applying Chebychev's inequality without centring yields
$$
P\left(\sum_{i=1}^{2.38^n}(1-Y_i^*) \geq 2.38^n-2.3^n+1\right)
\leq \frac{2.38^n\bigl(1-c_4/n^2\bigr)}{\left(2.38^n-2.3^n\right)^2}
\leq 2.38^{-n} (2.38/0.08)^2,
$$
which is finitely summable over~$n$. From Equation \ref{CN4}, this expression 
also bounds $P(C^{nc}_{14})$.

\subsubsection*{\bf Proof that $C^n_2$ occurs with high probability}

Recall that $t_i$ denotes the $i$-th visit by the random walk $S$ to the point 
$k_1$. We will define a subset $\{\kappa_1,\kappa_2,\ldots\}$ of the $t_i$'s 
inductively as follows. Fix $\kappa_1:=t_1$ and, for $i\geq1$, let 
$\kappa_{i+1}$ be the first $t_j$ after time $\kappa_i+n$:
$$
\kappa_{i+1}:=\min\{t_j\geq \kappa_i+n \st j\in\N\}.
$$
The $\kappa_i$'s form a strictly increasing sequence.  Note that, among the first $2.3^n$ stopping times $t_i$, there are at least 
$(2.3)^n/n$ stopping times $\kappa_i$. Let $Y_i$ be a Bernoulli variable which 
is equal to one iff the random walk takes $n$ steps to the right immediately 
following time $\kappa_i$: 
$$
Y_i:=\ind{S_{t_i+s}=k_1+s, \forall 0<s\leq n}.
$$ 
The variables $Y_1,Y_2,\ldots$ are i.i.d. and
$P(Y_i=1) = \alpha^n$, 
where $(1-\epsilon)/2 \leq \alpha \leq 1/2$.
Let $C^n_3$ be the event 
$$
\sum_{i=1}^{2.3^n/n}Y_i\geq 1.1^n.
$$
Then since among the first $2.3^n$ stopping times $t_i$ there
are at least $2.3^n/n$ stopping times $\kappa_i$, we see that
$C^n_3$ implies $C^n_2$.
Together with Chebychev's inequality, this yields
\begin{eqnarray*}
P(C^{nc}_2) 
&\leq& P(C^{nc}_3)
= P\left( \sum_{i=1}^{2.3^n/n}Y_i \geq 1.1^n \right) \\
&=& P\left( \sum_{i=1}^{2.3^n/n} \bigl(Y_i-\E[Y_i]\bigr) \geq 1.1^n-2.3^n\alpha^n/n \right) \\
&\leq& P\left( \sum_{i=1}^{2.3^n/n} \bigl(Y_i-\E[Y_i]\bigr) \geq 1.1^n-1.15^n/n \right) \\
&\leq& \frac{2.3^n \cdot 0.5^n/n}{(1.15^n/n-1.1^n)^2}
=1.15^{-n}\; n \left(1-n(1.1/1.15)^n \right)^{-2}.
\end{eqnarray*}
Since $n(1.1/1.15)^n \downarrow 0$ as $n\rightarrow\infty$,
$P(C^{nc}_2) \leq 529n\cdot 1.15^{-n}$ and so $P(^{nc}_2)$ is indeed finitely summable.

\subsection{The event $D^n$}

Let $R:[0,n] \rightarrow \Z$ be a piece of length~$n$ taken from the sample path of~$S$. In order
to prove $D^n$ occurs with high probability, we must first obtain a bound on the probability
of~$R$ being a $\delta$-path. towards this end, define
$$
D^n_0 := \{ R:[1,n] \rightarrow \Z \mbox{ is a $\delta$-path}\}.
$$
Also, set $X_i:=\abs{R_{i+1}-R_i} \cdot \ind{\abs{R_{i+1}-R_i}\neq1}$ and 
$Y_i:=\ind{\abs{R_{i+1}-R_i}\neq1}$.  The $X_i$'s represent the sizes of jumps 
over distances greater than unity while the $Y_i$'s indicate those times at 
which a non-unit jump occurred.  Note that $X_i=Y_i=0$ whenever the $i$-th jump 
is of unit length.  Observe that by definition, $D^n_0 = D^{nc}_1 \cap 
D^{nc}_2$, where
$$
D^n_1 := \left\{ \sum_{i=1}^{n-1} X_i > \delta n \right\}
$$
and
$$
D^n_2 := \left\{ \sum_{i=1}^{n-1} Y_i > \delta n \right\}.
$$
Therefore, $P(D^{nc}_0) \leq P(D_1^n) + P(D_2^n)$.
Our aim is to show that $P(D_1^n)$ and $P(D_2^n)$ can be bounded exponentially small as~$n$
becomes large.

From the moment generating function of $X_i$, we have
\begin{eqnarray*}
\E[e^{sX_i}]
&=& P(X_i=0) + \sum_{j=2}^\infty e^{sj} P(X_i=j) \\
&\leq& \epsilon+ \epsilon \sum_{j=2}^\infty e^{sj}e^{-cj} \\
&=& \epsilon + \epsilon \sum_{j=2}^\infty \left(e^{s-c}\right)^j \\
&=& \epsilon \left(1+\frac{e^{-2(c-s)}}{1-e^{-(c-s)}} \right).
\end{eqnarray*}
Applying Churnov's inequality, we have
\begin{eqnarray*}
P(D_1^n)
&=& P\left( \sum_{j=0}^{n-1} X_j \geq \delta (n+1)\right) \\
&\leq& \E\left[ e^{s(\sum_{j=0}^{n-1}X_j-\delta (n+1))} \right] \\
&=& e^{-\delta (n+1)s} \left( \epsilon \left( 1+\frac{e^{-2(c-s)}}{1-e^{-(c-s)}} \right) \right)^n \\
\leq \left( \epsilon \left( 1 + \frac{e^{-2(c-s)}}{1-e^{-(c-s)}} \right) \right)^n,
\end{eqnarray*}
for all $s>0$.
Since $\frac{e^{-2(c-s)}}{1-e^{-(c-s)}}$ is strictly increasing on $[0,\infty)$, we see that
$$
P(D^n_1)
\leq \left( \epsilon \left( 1 + \frac{e^{-2c}}{1-e^{-c}} \right) \right)^n \\
= e^{-c^+n},
$$
where $c^+:=-\ln\left(1+\frac{e^{-2c}}{1-e^{-c}}\right) -
\ln\epsilon$. Observe that $c^+$ can always be made positive and as
large as we like by choosing $\epsilon$ sufficiently small. In
particular, $\epsilon$ can be chosen to ensure that $c^+ >
c':=c+2\ln2.4+\ln2$, whence $P(D_1^n) \leq e^{-c'n}$ for all~$n$.

Next,
$$
\E[e^{sY_i}] = e^s P(\abs{R_{i+1}-R_i}\neq1) \leq \epsilon e^s.
$$
Applying the same standard large deviation argument used above, we obtain
\begin{eqnarray*}
P(D^n_2)
&=& P\left( \sum_{j=0}^{n-1} Y_i \geq \delta (n+1)\right) \\
&\leq& \E\left[ e^{s\sum_{j=0}^{n-1} Y_i -\delta (n+1)s} \right] \\
&\leq& e^{-\delta(n+1)s} \left( \E(e^{sY_1})\right)^n 
\leq e^{ns} \epsilon^n,
\end{eqnarray*}
for all $s>0$.  Since $e^ns$ is strictly increasing in~$s$, we have $P(D_2^n) \leq \epsilon^n
\leq e^{-c'n}$, provided $\epsilon$ is once again chosen small enough.

Now, the probability of seeing a $\delta$-path in a piece of~$S$ of length~$n$ can be bounded
below by
$$
P(D^n_0) \geq 1 - P(d_1^n)-P(D_2^n)
\geq 1 - 2e^{-c'n} = 1-e^{-c''n},
$$
where $c''=c+2\ln2.4$.
Therefore, $P(D_0^n)\nearrow1$ exponentially fast as $n\to\infty$.

Now, $D^n$ may be expressed as the intersection
$$
d^n = \bigcap_{s=n}^T D^{n,s},
$$
where $D^{n,s}$ is the event that the segment 
$$
S:[s-n,s]\rightarrow\Z
$$
of the sample path of~$S$ is a $\delta$-path.  Then,
\begin{eqnarray*}
P(D^{nc})
&\leq& \sum_{s=n}^T P\bigl((D^{n,s})^c\bigr) 
= \sum_{s=n}^T P(D^{nc}_0) \\
&\leq& (T-n+1)e^{-c''n} \leq 2.4^{2n} e^{-(c+2\ln2.4)n} = e^{-cn},
\end{eqnarray*}
which shows that $P(D^n)\to1$ exponentially fast.

\subsection{The event $F^n$}

Let $R:[0,n]\rightarrow\Z$ denote a $\delta$-path of length~$n$. Define $F^n_0$ 
to be the event that for all $x\in[-2.45^n,2.45^n]\setminus [-2n-\delta n,3n]$, 
there exists no $\delta$- path~$R$ such that
\begin{equation}
\label{obs=w}
\xi\circ R=w\;{\rm and}\;R(n)=x.
\end{equation}
Define $F^n_1$ to be the event that
for all $x\in [n^*+\delta n,3n]$
there exist no $\delta$-path $R$ satisfying \ref{obs=w} and
let $F^n_2$ denote the event that,
for all $x\in [-2n-\delta n,n^*-21\delta n]$,
there exists no $\delta$-path $R$ satisfying \ref{obs=w}.
Then let $F^n_J$ be the event that for any $\delta$-path
$R:[0,n]\rightarrow [-2.45^n,2.45^n]$ satisfying
$\xi\circ R=w$, we have $R(n)\in J$.  It should be clear that
\begin{equation}
\label{F012J}
F^n_0\cap F^n_1\cap F^n_2\subset F^n_J.
\end{equation}
 Let $F^n_-$ be the event that for any $\delta$-path
$R:[0,n]\rightarrow [-2.45^n,2.45^n]$ satisfying
$\xi\circ R=w$, we have $R(0)\in J^-$
where
$$
J^-:=[n^*-n-\delta n,n^*-n+21\delta n].
$$
Note that for any $\delta$-path $R:[0,n]\rightarrow\Z$ with $R(0)\in J^-$ and $R(n)\in J$, we will have
\begin{equation}
\label{conditions}
n^*-n+(1-\delta)s-2\delta n\leq R(s)\leq n^*-n+(1-\delta)s+2\delta n\;,\;
\forall s\in[0,n].
\end{equation}
Fixing~$\delta$ so that $63\delta<1$, (\ref{conditions}) implies that $R(s)\in[-
n,n]$ for all $s\in[0,n]$. Consequently,
$$
F^n_J\cap F^n_-\subset F^n
$$
and hence
$$
P(F^{nc})\leq P(F^{nc}_J)+P(F^{nc}_-).
$$

The proof that $F^n_-$ occurs with high probability is similar to that for 
$F^n_J$ and we shall leave it to the reader. 

To prove the finite summability of $P(F^{nc}_J)$ we use \ref{F012J} which 
implies that 
$$
P(F^{nc}_J)\leq P(F^{nc}_0)+P(F^{nc}_1)+P(F^{nc}_2).
$$
Hence, we only need to prove high probability of the
events $F^n_0$, $F^n_1$ and $F^n_2$.
The key to proving  this is the following lemma:

\begin{lemma}
\label{deltaPathCounting}
Let~$x$ be a point. For every~$n$, the total number of $\delta$-paths of 
length~$n$ starting at~$x$ is less than
$$
2^{n(1+H(\delta)+2\delta)}
$$
Define a $(k,\delta)$-path of length $kn$ to be a path comprising a total 
of $kn$ steps of which fewer than $\delta n$ are non-unit and for 
which the total variation of the non-unit steps is also less than $\delta n$.  Clearly, all $(1,\delta)$-paths of length~$n$ are $\delta$-paths of length~$n$ and conversely.
Then, the number of $(10\delta,\delta)$-paths of length $10\delta n$ starting at~$x$ is less than
$$
2^{10\delta n(1+H(0.1)+0.3)}.
$$
\end{lemma}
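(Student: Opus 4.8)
The plan is to bound the number of $\delta$-paths by encoding each one economically and then counting the possible encodings. A $\delta$-path $R:[0,n]\to\Z$ with $R(0)=x$ is completely determined by its increment sequence $v_t:=R(t+1)-R(t)$, $t\in[0,n-1]$, so it suffices to count the admissible increment sequences. I would split this data into three separate pieces: (i) the set $M=\{t:|v_t|\neq1\}$ of times at which a non-unit step is made; (ii) for each $t\notin M$, the sign of the unit step $v_t\in\{-1,+1\}$; and (iii) for each $t\in M$, the actual value $v_t\in\Z\setminus\{-1,+1\}$.

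First I would bound (i). Since $R$ is a $\delta$-path, $|M|\le\delta n$, so the number of choices for $M$ is $\sum_{m\le\delta n}\binom nm$, which is at most $2^{nH(\delta)}$ because $\delta$ is a small fixed constant with $\delta<1/2$ (here $H$ is the binary entropy $H(p)=-p\log_2 p-(1-p)\log_2(1-p)$). The number of choices for (ii) is trivially at most $2^n$. The heart of the matter is (iii): given $M$ with $|M|=m\le\delta n$, one must count the integer sequences $(v_t)_{t\in M}$ with $v_t\notin\{-1,+1\}$ and $\sum_{t\in M}|v_t|\le\delta n$. I would do this with a Chernoff/generating-function estimate: writing $G(z)=\bigl(1+\tfrac{2z^2}{1-z}\bigr)^m=\sum_k a_k z^k$, where $a_k$ counts such sequences of $\ell^1$-norm exactly $k$, one has $\sum_{k\le\delta n}a_k\le z^{-\delta n}G(z)$ for every $z\in(0,1)$; taking $z=1/2$ gives the clean bound $2^{\delta n}\cdot 2^m\le 2^{2\delta n}$. (Alternatively one can count lattice points of the $\ell^1$-ball directly via stars-and-bars, obtaining the same order.) Multiplying the three bounds yields $2^{nH(\delta)}\cdot 2^n\cdot 2^{2\delta n}=2^{n(1+H(\delta)+2\delta)}$, which is the first assertion. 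I expect step (iii) to be the point requiring the most care, since a careless treatment of the signs and magnitudes of the non-unit jumps loses an extra factor of roughly $2^{\delta n}$ and spoils the stated exponent.

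For the second assertion I would reduce it to the first. By definition a $(10\delta,\delta)$-path of length $10\delta n$ is a path $R:[0,L]\to\Z$ with $L:=10\delta n$ steps, of which fewer than $\delta n=L/10$ are non-unit and whose non-unit steps have total variation below $\delta n=L/10$; that is precisely the definition of a $0.1$-path of length $L$. Applying the first part with $n$ replaced by $L$ and $\delta$ replaced by $0.1$, the number of such paths starting at $x$ is at most $2^{L(1+H(0.1)+2\cdot0.1)}=2^{L(1+H(0.1)+0.2)}$, which is strictly below the claimed $2^{L(1+H(0.1)+0.3)}=2^{10\delta n(1+H(0.1)+0.3)}$. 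This completes the proof.
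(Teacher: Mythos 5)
Your proof is correct, and it follows the same basic decomposition as the paper's: choose the set $M$ of times with non-unit steps (at most $2^{nH(\delta)}$ ways), choose the signs of the unit steps (at most $2^n$ ways), and then count the possible values of the non-unit increments. The only genuine difference is in that last step, and it is a difference worth noting. The paper bounds the number of value-assignments for the non-unit steps by three separate factors of $2^{\delta n}$ (a partition of an interval of length $\delta n$ for the magnitudes, a choice of which non-unit steps are zero, and a choice of signs), arriving at $2^{n(1+H(\delta)+3\delta)}$ --- which is actually \emph{weaker} than the bound $2^{n(1+H(\delta)+2\delta)}$ asserted in the lemma and used later in the $F^n$ estimates. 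Your generating-function (Chernoff) bound, with per-step factor $1+\tfrac{2z^2}{1-z}$ evaluated at $z=1/2$, handles magnitudes, zeros and signs simultaneously and yields $2^{\delta n}\cdot 2^{m}\le 2^{2\delta n}$, so you obtain exactly the stated exponent $1+H(\delta)+2\delta$. (Your parenthetical stars-and-bars alternative, if the signs are counted separately, would reproduce the paper's $3\delta$ rather than $2\delta$.) The discrepancy is harmless for the paper's purposes, since everything downstream only needs $H(\delta)+O(\delta)\to0$ as $\delta\to0$, but your argument is the one that actually proves the lemma as stated. Your reduction of the second assertion to the first, by recognizing a $(10\delta,\delta)$-path of length $10\delta n$ as a $0.1$-path of length $L=10\delta n$ and absorbing the resulting $0.2$ into the claimed $0.3$, is clean and correct; the paper instead redoes the count from scratch and lands exactly on $0.3$.
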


\begin{proof}
First, if all steps are either~$+1$ or~$-1$, we have at most $2^n$ choices, 
since the path makes at most~$n$ jumps of length~$1$. Then among the $n$ steps, 
we must consider which of them are not of unit length. There are at most $\delta 
n$ such steps and, since~$\delta$ is taken to be small (in particular 
$\delta<1/2$), we shall have fewer than $\binom n{\delta n}$ possible 
arrangements of the non-unit jumps among the unit jumps.  An argument involving 
Stirling's approximation reveals the bound $2^{H(\delta)n}$ on the number of 
possible arrangements, where the entropy $H(\delta)$ is given by $H(\delta) := -
\delta\log_2\delta -(1-\delta)\log_2(1-\delta)$.

Next we need to take into account the length of each non-unit step. We know that 
the total variation of the non-unit jumps is no more than $\delta n$. Hence, we 
can partition an interval of length at most $\delta n$ to determine the lengths 
of the non-unit steps which are not of length $0$. This gives a maximum of 
$2^{\delta n}$ possibilities.  Because we can have steps of length zero, we must 
also determine which of the non-unit steps have length zero and this gives 
another $2^{\delta n}$ possibilities at most. Finally, we need to account for 
the signs of the non-unit steps. Since there are only two possible signs, There 
are at most $2^{\delta n} $ choices. Multiplying all the preceding bounds yields 
an upper bound on the maximum number of $\delta$-paths of length~$n$ which start 
at~$x$:
$$
2^{n(1+H(\delta)+3\delta)}.
$$

To compute a bound on the number of $(10\delta,\delta)$-paths of length 
$10\delta n$ starting at~$x$, we proceed as before. Determining the steps of 
size $+1$ and $-1$, we have at most $2^{10\delta n}$ choices. Then we need to 
factor in the steps which are not of unit-size. There are at most $\delta n$ of 
these, which corresponds to a proportion of no more than $0.1$ the length of the 
path. Hence, we have at most $2^{10\delta nH(0.1)}$ choices. Finally, 
determining the length and sign of the non-unit steps as before gives a maximum 
of $2^{3\delta n}$ choices. Combining these bounds in a product, we find that 
there are less than 
$$
2^{10\delta n(1+H(0.1)+0.3)}.
$$
$(10\delta,\delta)$-paths of length $10\delta n$ starting at~$x$.
\end{proof}

\subsubsection*{Proof that $F^n_0$ occurs with high probability}

Let $x$ belong to the set 
$$
X:=[-(2.45)^n,(2.45)^n] \setminus [2n-\delta n,3n].
$$
Now, $n+\delta n-1$ is  the maximum distance
a $\delta$-path can travel in~$n$ steps.
So any $\delta$-path of length $n$ which visits~$x$ 
will stay outside $[-n,n]$. 
Thus, if $R:[0,n]\rightarrow
\Z$ is a non-random
$\delta$-path such that $R(n)=x$, then $\xi\circ R$ is independent
of $w$. To see this recall that The scenery is i.i.d. and so disjoint parts of it are independent
of each other. note that the pattern $w$ is a substring of $\xi_{-n}\xi_{-n+1}\ldots \xi_{n}$
while $\xi\circ R$ only depends on parts of the scenery outside
$[-n,n]$. 
Furthermore, the string $w$ is i.i.d. with each of the five possible colors
appearing with probability $1/5$, that is,
$$
P(\xi\circ R=w)=(1/5)^n,
$$
where~$R$ is a non-random $\delta$-path of length~$n$ ending at~$x$. Since there 
are at most  $2^{n(1+H(\delta)+2\delta)}$ $\delta$-paths of length $n$ ending 
at~$x$, we get
\begin{equation}
\label{inequalityF0}
P(F^{nc}_{0,x}) \leq \left(\frac{2^{1+H(\delta)+2\delta}}{5}\right)^n
\end{equation}
where $F^n_{0,x}$ designates the event that there exists no
$\delta$-path of length $n$ for which $\xi\circ R=w$ and 
$R(n)=x$. Finally, $F^n_0$ may be expressed as
$$
F^n_0=\cap_{x\in X} F^n_{0,x}.
$$
Hence,
\begin{eqnarray}
P(F^{nc}_0)
&\leq& \sum_{x\in X} P(F^{nc}_{0,x}) \nonumber \\
&\leq& \abs X \left(\frac{2^{1+H(\delta)+2\delta}}{5}\right)^n 
= 2\left(\frac{2.45 \cdot 2^{1+H(\delta)+2\delta}}{5}\right)^n,
\label{Fnc0}
\end{eqnarray}
since~$X$ contains fewer than $2\cdot 2.45^n$ elements.
Now, $H(\delta)+2\delta$ converges to zero as~$\delta$ tends to zero. Hence, by taking
$\delta>0$ small enough, we see that
\begin{equation}
\label{frac}
\frac{2.45 \cdot 2^{1+H(\delta)+2\delta}}{5}
\end{equation}
can be made as close as we like to $2.45\cdot2/5=0.98<1$.  Fixing $\delta>0$ 
small enough so that expression \ref{frac} is strictly less than $1$, we have 
that inequality \ref{Fnc0} constitutes a negative exponential bound for 
$P(F^{nc}_0)$. 

\subsubsection*{Proof that $F^n_1$ occurs with high probability}

Let $x$ be a non-random integer such that 
\begin{equation}
\label{x}
x\in [n^*+\delta n,3n].
\end{equation}
Assume also that $R:[0,n]\rightarrow \N$ is
a non-random $\delta$-path such that $R(n)=x$. Note that because~$R$ is 
a $\delta$-path, $R(n-i)$ is never further from~$x$ than $\delta n+i$
and hence because of \ref{x}, we have $R(n-i)>n^*-i$.
It follows that the $n-i$-th letter of the word
$w$, which we denote by $w(n-i)$ and which is equal to
$\xi_{n^*-i}$, is independent of 
$$
\xi(R(n-i))\xi(R(n-i+1))\ldots \xi(R(n)).
$$
This holds for all $i=0,1,\ldots,n$. Hence we find that if
\ref{x} holds with $R(n)=x$ for a non-random $\delta$-path $R$, then
$$
P(\xi\circ R=w)=(1/5)^n.
$$
Next, we synthesize the proof that $F^n_0$ occurs with high probability. First 
let $F^n_{1,x}$ be the event that there exists no $\delta$-path of length $n$ 
ending in $x$ and generating $w$. Since the number of such paths, according to 
Lemma~\ref{deltaPathCounting}, is no greater than $2^{n(1+H(\delta)+2\delta)}$, 
we have
$$
P(F^{nc}_{1,x})\leq \left(\frac{2^{(1+H(\delta)+2\delta)}}{5}\right)^n.
$$
Finally, there are fewer than $3n$ points $x$ in the set $[n^*+\delta n,3n]$ and 
hence the last inequality above yields
\begin{equation}
\label{boundforF1}
P(F^{nc}_{1})\leq 3n\left(\frac{2^{(1+H(\delta)+2\delta)}}{5}\right)^n.
\end{equation}
Once again choosing $\delta>0$ small enough, we have 
$$
\frac{2^{(1+H(\delta)+2\delta)}}{5}<1,
$$
whence inequality \ref{boundforF1} gives the desired negative exponential bound 
on $P(F^{nc}_1)$.

\subsubsection*{Proof that $F^n_2$ occurs with high probability.}

Let $x\in [-2n-\delta n,n^*-21\delta n]$.
Assume that $R:[0,n]\rightarrow \Z$ is a $\delta$-path such that
$R(n)=x$. Note then that for all $i\leq 10\delta n$ we have
$$
R(n-i)\leq x+i+\delta n\leq x+10\delta n+\delta n<n^*-10\delta n.
$$
Hence 
$$
\xi(R(n-10\delta n))\xi(R(n-10\delta n+1)) \xi(R(n-10\delta n+2))
\ldots \xi(R(n))
$$
is independent of 
$$
\xi_{n^*-10\delta n} \xi_{n^*-10\delta n+1}\ldots 
\xi_{n^*}
$$
which corresponds to the last
$10\delta n$ letters of the word~$w$. Hence, we get
$$P(\xi\circ R=w)\leq \left( \frac{1}{5}\right)^{10\delta n}.
$$
By Lemma~\ref{deltaPathCounting}, there are no more than $2^{10\delta 
n(1+H(0.1)+0.3)} $ $\delta$-paths of length $10\delta$ ending in a given point 
$x$. Let $F^n_{2,x}$ be the event that there is no $\delta$-path of length $n$ 
such that $R(n)=x$ and $\xi\circ R=w$.  Then
$$
P(F^{nc}_{2,x})\leq \left(\frac{2^{10\delta (1+H(0.1)+0.3)}}{5}\right)^n
$$
for all~$x\in[-2n-\delta n,n^*-21\delta n]$. Since there are no more than $3n$ points in $[-2n-\delta n,n^*-21\delta n]$, we find
\begin{equation}
\label{boundF2}
P(F^{nc}_2)\leq 3n\left(\frac{2^{ 1+H(0.1)+0.3}}{5}\right)^{10\delta n}.
\end{equation}
Now, $\frac{2^{1+H(0.1)+0.3)}}{5}$ is strictly less than $1$ so
expression \ref{boundF2} provides the desired exponential bound
on $P(F^{nc}_2)$.

\subsection{The event $G^n$.}

We shall use $\tilde\mu$ to denote the empirical distribution of
$$
S_{\nu_1},S_{\nu_2},\ldots,S_{\nu_j},
$$
where the random variable~$j$ is the largest $i$ such that $\nu_i\leq T=2.4^n$; that is,
$$
\tilde\mu(x) := \tilde\mu(\{x\})
= \frac1j \sum_{i=0}^j\ind{S_{\nu_i}=x}.
$$

Let $G^n_1$ denote the event that the difference between~$\tilde\mu$ and~$\mu$ is less than or equal to $n^{3/2}/1.1^{n/2}$
in total variation:
$$
G^n_1 := \left\{ \sum_{x\in[-n,n]}\abs{\tilde\mu(x)-\mu(x)}
< \frac{n^{3/2}}{\sqrt{1.1}^n} \right\}.
$$

Next, set $G^n_2:=B^n\cap C^n\cap D^n\cap F^n$. Clearly, we can write 
$P(G^{nc}_1) \leq P(G^{nc}_1 \given G^n_2) + P(G^{nc}_2)$.  Since $P(G^{nc}_2) 
\leq P(B^{nc})+ P(C^{nc})+P(D^{nc})+P(F^{nc})$ and the 4 quantities on the 
right-hand side are finitely summable, $P(G^{nc}_2)$ is itself finitely summable. 
Therefore, to show that $P(G^{nc}_1)$ is finitely summable, it is enough to 
prove finite summability of $P(G^{nc}_1 \given G^n_2)$.  Towards this end, 
observe that the event $G^n_2$ implies that at least $1.1^n$ stopping times 
$\nu_i$ manifest by time~$T$ and that each of these times marks the end of a 
$\delta$-path of length~$n$ in~$I$ whose final position lies in~$J$.  As a consequence,
$\tilde\mu$ has support on $J\subset I$ where $\abs J = 22\delta n+1 
<n < 2n+1 = \abs I$ for $\delta>0$ sufficiently small. Now,
$\sum_{x\in J}\abs{\tilde\mu(x)-\mu(x)} 
\leq n\sup_{x\in J}\abs{\tilde\mu(x)-\mu(x)}$.
Conditioning on $j=k$, we have
\begin{eqnarray}
\lefteqn{P\left(\sum_{x\in J}\abs{\tilde\mu(x)-\mu(x)} \geq \frac{n^{3/2}}{\sqrt{1.1}^n} \midgiven \{j=k\}\cap G^n_2\right)}
\nonumber \\
&\leq& P\left( \sup_{x\in J}\abs{\tilde\mu(x)-\mu(x)} \geq \frac{\sqrt n}{\sqrt{1.1}^n} \midgiven \{j=k\}\cap G^n_2\right)
\nonumber \\
&=& P\left( \cup_{x\in J} \bigl\{ \abs{\tilde\mu(x)-\mu(x)} \geq \frac{\sqrt n}{\sqrt{1.1}^n} \bigr\} \midgiven \{j=k\}\cap G^n_2\right) 
\nonumber \\
&\leq& \sum_{x\in J} P\left( abs{\tilde\mu(x)-\mu(x)} \geq \frac{\sqrt n}{\sqrt{1.05}^n} \midgiven \{j=k\}\cap G^n_2\right).
\label{tildemu.mu.comp}
\end{eqnarray}
Next, define $Y_0(x)=0$ and $Y_i(x)=\sum_{k=1}^iX_i(x)$, for $i\in\N$, where $X_i(x)=\ind{S_{\nu_i}=x}-\mu(x)$.
Now $Y_i(x)$ is a Martingale with increment size bounded by~$1$ and $Y_k(x)-Y_0(x)=k(\tilde\mu(x)-\mu(x)$.  An application of the Azuma-Hoeffding inequality shows that 
\begin{eqnarray*}
P\left( \tilde\mu(x)-\mu(x) \geq \frac{\sqrt n}{\sqrt{1.1}^n} \midgiven \{j=k\}\cap G^n_2\right)
&=& P\left( Y_k(x)-Y_0(x) \geq \frac{k\sqrt n}{\sqrt{1.1}^n} \midgiven \{j=k\}\cap G^n_2\right) \\
&\leq& e^{-\frac{k^2n}{2k\cdot1.1^n}}
\leq e^{-0.5nk/1.1^n}.
\end{eqnarray*}
Similarly, $P\left( \tilde\mu(x)-\mu(x) \leq -\frac{\sqrt n}{\sqrt{1.1}^n} \midgiven \{j=k\}\cap G^n_2\right) \leq e^{-0.5nk/1.1^n}$ so that
$$
P\left( \abs{\tilde\mu(x)-\mu(x)} \geq \frac{\sqrt n}{\sqrt{1.1}^n} \midgiven \{j=k\}\cap G^n_2\right) \leq 2e^{-0.5nk/1.1^n}.
$$
Combining this bound with \ref{tildemu.mu.comp} then yields
$$
P\left(\sum_{x\in J}\abs{\tilde\mu(x)-\mu(x)} \geq \frac{n^{3/2}}{\sqrt{1.1}^n} \midgiven \{j=k\}\cap G^n_2\right)
\leq 2\abs J e^{-0.5nk/1.1^n}
< 2ne^{-0.5nk/1.1^n}.
$$
Since $C^n$ guarantees $j\geq 1.1^n$, $k$ will belong to $[1.1^n,T]$ and we find that
\begin{eqnarray*}
P(G^{nc}_1 \given G^n_2)
&=& \sum_{k=1.1^n}^T P(G^{nc}_1 \given \{j=k\}\cap G^n_2) P(j=k \given G^n_2) \\
&\leq& \sum_{k=1.1^n}^T 2ne^{-0.5nk/1.1^n} P(j=k \given G^n_2) 
\leq 2ne^{-n/2},
\end{eqnarray*}
which is finitely summable.

Now, if it could be shown that $G^n_1\cap G^n_2 \subset G^n$, then we would have 
$P(G^{nc}) \leq P(G^{nc}_1) + P(G^{nc}_2)$.  However, we have already shown that 
the two terms on the right-hand side are finitely summable, whence $P(G^{nc})$ 
would be finitely summable. 

So, it remains to show that $G^n_1\cap G^n_2 \subset G^n$ for~$n$ large enough.  In order to do this, we need to show that 
\begin{equation}
\label{90delta}
\frac{P_\mu(S_r=n+1)-
P_\mu(S_r\notin[-n,n+1])}{2}\geq \left(\frac{1-\epsilon}2\right)^{90\delta n+2}. 
\end{equation}
Recall  that~$r$ is defined to be $90\delta n$.  We shall take 
$\delta>0$ small enough so that $\left(2/(1-
\epsilon)\right)^{90\delta}<\sqrt{1.1}$. Then for all $n$ large enough, the 
bound on the right side of \ref{90delta} is strictly larger than 
$n^{3/2}/1.1^{n/2}$. However, the event $G^n_1$ guarantees $n^{3/2}/1.1^{n/2}$ 
as an upper bound on the difference in total variation between the measure 
$\tilde\mu$ and~$\mu$. In turn, the difference in total variation 
between~$\tilde\mu$ and~$\mu$ bounds the difference between 
$P_{\tilde\mu}(\chi_r=e)$ and $P_\mu(\chi_r=e)$ for every $e\in\{1,2,3,4,5\}$. 
Thus, we have:
\begin{equation}\label{onemoretime}
\abs{P_{\tilde\mu}(\chi_r=e)-P_\mu(\chi_r=e)} \leq \sum_x\abs{\tilde\mu(x)-
\mu(x)} \leq \frac{n^{3/2}}{\sqrt{1.1}^n}.
\end{equation}
Note that $\tilde 
P_\mu(\chi_r=e)=P_{\tilde\mu}(\chi_r=e)$.  Therefore, if (\ref{90delta}) holds, 
$\abs{\tilde P_\mu(\chi_r=e)-P_\mu(\chi_r=e)}$ will be bounded by the quantity in 
(\ref{boundforerror2}) for~$n$ large enough and hence $G^n$ will hold.

To finish, we return to the question of (\ref{90delta}) holding.
Since $G^n_2$ holds, the support of $\mu$ is in the interval $J$. Let us
first assume that the random walk $S$ is aperiodic and symmetric with the increment distribution given in (\ref{simplestincrementdistribution}).
We will deal with the more general case later.
Now, $S$ is able to reach the point $n+1$ from any point
of $J$ in $r$ steps with probability
greater than or equal to $\left(\frac{1-\epsilon}2\right)^r=\left(\frac{1-\epsilon}2\right)^{90\delta n}$.
The minimum distance from any point $x\in J$ to the point $n+1$ is $60\delta n$.  Using (\ref{decayrate}) with $b=60
\delta n$ and $q=1.5$, we calculate
the decay in the tail of the state probability distribution beyond a distance of $60\delta n$ from~$x$ after $90\delta n$ steps to be
$$
\rho(90\delta n,60\delta n)=\frac{P(S_r=n+2)}{P(S_r=n+1)}
< \frac{1.5-1}{1.5+1} = \frac15.
$$
Hence, for any point $x\in J$, we have
\begin{eqnarray*}
P_x(S_r=n+1)-P_x(S_r>n+1)-P_x(S_r<-n)
&\geq& (1-0.25-0.25)P_x(S_r=n+1) \\
&=& 0.5 P_x(S_r=n+1),
\end{eqnarray*}
but as mentioned above, $P_x(S_r=n+1)\geq \left((1-\epsilon)/2\right)^r=\left((1-\epsilon)/2\right)^{90\delta n}$
and hence we obtain
$$
P_x(S_r=n+1)-P_x(S_r\notin[-n,n+1])\geq \left(\frac{1-\epsilon}2\right)^{90\delta n+1}.
$$
Here $P_x(\cdot)$ refers to the distribution when~$S$ is started at the point~$x$.
As the event $G^n_2$ holds, the measure $\mu $has support on the interval $J$ and so
$$
P_\mu(S_r=n+1\given F^n) - P_\mu(S_r\notin[-n,n+1]\given F^n)
\geq \left(\frac{1-\epsilon}2\right)^{90\delta n+1}.
$$
Dividing both sides by two then yields (\ref{90delta}).

While the veracity of (\ref{90delta}) has been discussed assuming the aperiodic, 
symmetric random walk of Section~\ref{sec:ex2}, it is not too difficult to show 
that it holds for any random walk satisfying Conditions~(\ref{conditionS1})--
(\ref{conditionS4}) given $G^n_2$, provided we select the parameters of the 
random walk appropriately.  In other words, for large~$n$, $G^n_1\cap 
G^n_2\subset G^n$ when $\epsilon>0$ is sufficiently small and $c>0$ is 
sufficiently large.

\section{Reconstruction of the whole scenery}
\label{wholereconstruction}

To conclude, we prove that we can reconstruct the whole scenery $\xi$ a.s. up to 
equivalence. for this we use the following lemma which was proved in 
\cite{Lowe-Matzinger-Merkl2001}.

\begin{lemma}
Assume that there exists an algorithm which reconstructs $\xi$ with probability 
strictly greater than $1/2$. Then we can also reconstruct $\xi$ with probability 
one. More precisely, if there exists a measurable map
$$
\sA^* : \{1,2,3,4,5\}^\N\rightarrow
\{1,2,3,4,5\}^\Z
$$
such that
$$
P(\sA^*(\xi)\approx \xi)>1/2,
$$
then there exists a measurable map 
$$
\sA:\{1,2,3,4,5\}^\N \rightarrow \{1,2,3,4,5\}^\Z
$$
such that
$$
P(\sA(\xi)\approx \xi)=1.
$$
\end{lemma}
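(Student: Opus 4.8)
The plan is to show that the equivalence class of $\xi$ is almost surely a measurable function of the observation sequence $\chi$ alone; once this is in hand one takes $\sA$ to be that function. The hypothesis gives an algorithm $\sA^*$ that outputs the correct equivalence class with some probability $p>1/2$, and the task is to boost $p$ to $1$ by exploiting two features of the model: (i) the recurrence of $S$, which lets us observe $\xi$ along infinitely many regenerating pieces of the walk, and (ii) the i.i.d.\ structure of $\xi$, which supplies a zero--one law.

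First I would exploit recurrence. Let $\rho_m$ be the time of the $m$-th visit of $S$ to $0$; these are a.s.\ finite. By the strong Markov property and the independence of $\xi$ from $S$, the restarted record $\chi^{(m)}:=(\chi_{\rho_m+t})_{t\ge0}$ satisfies $(\xi,\chi^{(m)})\stackrel{\mathcal D}{=}(\xi,\chi)$, so $P(\sA^*(\chi^{(m)})\approx\xi)=p>1/2$ for every $m$. Moreover, conditionally on $\xi$ the sequence $(\chi^{(m)})_{m\ge1}$ is stationary (every restart is from the origin) and ergodic: an event invariant under its time-shift is a tail event of the i.i.d.\ excursions of $S$ between consecutive visits to $0$, hence trivial by Kolmogorov's zero--one law. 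Birkhoff's ergodic theorem then gives, for every equivalence class $c$, a.s.\ convergence of the empirical frequency $\tfrac1N\#\{1\le m\le N:\sA^*(\chi^{(m)})\approx c\}$.

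Since the $\rho_m$ are not observable from $\chi$, I would build $\sA$ from the observable time shifts $T^k\chi:=(\chi_{k+t})_{t\ge0}$ instead, using that $T^k\chi$ is $\xi$ translated by $S_k$ and seen along a fresh walk, so that $\sA^*(T^k\chi)$ still targets the class $[\xi]$. Define $\sA(\chi)$ to be the unique equivalence class whose empirical frequency among $\sA^*(T^0\chi),\sA^*(T^1\chi),\dots$ has $\liminf>1/2$, if such a class exists, and a fixed scenery otherwise; this is a measurable function of $\chi$, and two disjoint classes cannot both qualify. To finish one needs: almost surely, the long-run frequency of the correct class $[\xi]$ exceeds $1/2$. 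Here recurrence forces the environment viewed from the walker to sweep over all translates of $\xi$, and because $\xi$ is i.i.d.\ the spatial ergodic theorem should identify the resulting long-run success rate with the \emph{average} over all translates of the per-vantage-point success probability $h(\xi(\cdot+x)):=P(\sA^*(\chi)\approx\xi\mid \xi(\cdot+x))$, which equals $p$.

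The main obstacle is precisely this last step. The ``environment seen from the walker'' process is stationary but \emph{not} ergodic --- its equivalence class $[\xi]$ is a shift-invariant observable --- so the naive Birkhoff limit of the success indicators is a priori only the success probability conditioned on $[\xi]$, which could in principle be $\le1/2$ on a positive-probability set of sceneries. Overcoming this requires coupling the renewal (excursion) structure of $S$ with the translation-ergodicity of the i.i.d.\ scenery: rewrite the time average $\tfrac1N\sum_{k<N}\mathbf 1_{\{\sA^*(T^k\chi)\approx\xi\}}$ via the occupation measure of $S$ as a spatial average of $h(\xi(\cdot+x))$ against the probabilities $u_N(x)=\tfrac1N\sum_{k<N}P(S_k=x)$, note that one-dimensional recurrence makes $u_N$ spread out in a F\o lner fashion, and conclude that the limit is the constant $p>1/2$ rather than a non-trivial $[\xi]$-measurable random variable (a concentration estimate controlling the fluctuation of the indicator sum around its conditional mean is the technical heart here). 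I would isolate this as a separate lemma; granting it, the plurality rule outputs $[\xi]$ almost surely, which is the assertion.
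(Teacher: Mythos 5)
You stop exactly where the proof has to be done, and the reason you give for stopping there is wrong. The architecture you set up --- apply $\sA^*$ to every time shift $T^k\chi$, note that each attempt succeeds with the same probability $p>1/2$ because $\theta^{S_k}\xi$ is again i.i.d.\ and independent of the restarted walk, and output the class winning a Ces\`aro majority --- is the standard one (the paper itself gives no proof; it imports the lemma from L\"owe--Matzinger--Merkl, where essentially this scheme is carried out). But the decisive claim, that the long-run frequency of the correct class exceeds $1/2$ almost surely, is deferred to an unnamed lemma, and your justification for why a hard lemma is needed is mistaken: you assert that the environment seen from the walker fails to be ergodic because $[\xi]$ is a shift-invariant observable. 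The $\sigma$-field generated by $[\xi]$ is trivial modulo null sets --- any event $\{[\xi]\in C\}$ is invariant under the spatial shift of the i.i.d.\ scenery and so has probability $0$ or $1$ by the Kolmogorov zero--one law --- so conditioning on $[\xi]$ costs nothing and the obstruction you describe does not exist. What actually needs proving is that the invariant $\sigma$-field of the stationary sequence $Z_k:=\ind{\sA^*(T^k\chi)\approx\xi}$ is trivial. This does follow from the i.i.d.\ structure: $Z_k$ is a fixed function of $\bigl(\theta^{S_k}\xi,(S_{k+t}-S_k)_{t\ge1}\bigr)$, a time-shift-invariant event of this chain is a.s.\ invariant under $\eta\mapsto\theta^x\eta$ for every $x$ in the support of $S_1$ (which contains $\pm1$ by (\ref{conditionS1})), hence is a.s.\ spatially shift-invariant and therefore trivial; Birkhoff then gives $\frac1N\sum_{k<N}Z_k\to p>1/2$ a.s.\ and the plurality rule succeeds. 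None of this appears in your write-up.

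The substitute route you sketch would not close the gap either. Rewriting the time average as a spatial average of $h(\theta^x\xi)$ against $u_N(x)=\frac1N\sum_{k<N}P(S_k=x)$ and invoking F\o lner-type spreading addresses only the expectation: for an almost sure statement you must control the realized occupation measure $\frac1N\sum_{k<N}\ind{S_k=x}$, and for a recurrent one-dimensional walk the normalized local times do not flatten out --- after diffusive scaling they converge to a genuinely random profile --- so no F\o lner argument forces the time average of $h(\theta^{S_k}\xi)$ toward the spatial mean of $h$. The ``concentration estimate'' you call the technical heart is thus both unproven and aimed at the wrong target. Two smaller points: your excursion-based averaging in the second paragraph only identifies the limit as $P(\sA^*(\chi)\approx\xi\mid\xi)$, which has mean $p$ but need not exceed $1/2$ a.s., so that route also terminates short of the conclusion; and the final map $\sA$ must measurably select a representative scenery from the winning equivalence class, which is routine but should be stated.
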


So, we need to build a map which reconstructs $\xi$ with probability strictly 
greater than $1/2$. Due to inequality \ref{finitesummability}, there exists a 
non-random $n_0$ such that
\begin{equation}
\label{itworksaftern0}
P\left( \hat{\xi}(n+1)=\xi(n+1), \hat{\xi}(-n-1)=\xi(-n-1),\forall n\geq n_0
\right)\geq \frac45.
\end{equation}
We tune the parameter $\epsilon>0$ small enough so that the random walk $S$ only 
makes $\pm1$ steps for a long time. It is known that we can reconstruct a finite 
piece of $\xi$ close to the origin (see \cite{Lowe-Matzinger-Merkl2001}) 
with probability as close to 
one as we like when we are dealing with a simple random walk on a five-color 
scenery provided we have enough observations. So by taking $\epsilon>0$ small 
enough we can ensure that the random walk follows a simple random walk path long 
enough to reconstruct the finite piece $\xi\vert_{[-n_0,n_0]}$ with probability 
larger than $\frac45$. So, we start by reconstructing the finite piece 
$\xi\vert_{[-n_0,n_0]}$. Then we proceed inductively in~$n$ for $n\geq n_0$. 
Once we have reconstructed $\xi\vert_{[-n,n]}$ we estimate $\xi(n+1)$ and $\xi(-
n-1)$ using the algorithm described in Section~\ref{thealgorithm}. In this way 
we end up reconstructing $\xi$ correctly with probability at least $3/5$ which 
is strictly larger than $1/2$. This establishes that we can reconstruct $\xi$ 
with probability one and thus completes the proof of Theorem~\ref{maintheorem}.

One more remark: Since in general one can not reconstruct the scenery but only reconstruct it
up to equivalence, it follows that we can not reconstruct $\xi_{|[-n,n]}$ exactly. Instead,
from the result of matzinger \cite{Lowe-Matzinger-Merkl2001}, we 
have that we can  only perform reconstruction
successfully with high probability on an approximately centered interval $\xi_{|[-n+l,n+l]}$
where $l$ is small compared to $n$, but not known to us. Note however that our one-point
reconstruction algorithm works if we are given the string $\xi(-n+l)\xi(-
n+l+1)\ldots\xi(n+l)$ without knowing its exact position (i.e. not knowing $l$), but only
that $l$ is of order smaller than $n$. 

\section*{Acknowledgements}

The first and third author would like to 
thank the Millennium Nucleus in Information and 
Randomness P04-069-F, MSI, as well as SFB701 in
Bielefeld for supporting this work. The third 
author would like to also thank FAPESP for supporting his visit
to the IME-USP during which time an important
part of this article was written.

\bibliographystyle{plain}
\bibliography{scen08}
 
\end{document}